\renewcommand{\Re}{\mathop{\rm Re}\nolimits}
\newcommand{\p}{\partial}
\newcommand{\e}{\varepsilon}
\newcommand{\ri}{{\rightarrow}}
\newcommand{\C}{{\mathbb C}}
\newcommand{\R}{{\mathbb R}}
\newcommand{\la}{\lambda}
\newcommand{\ty}{\infty}
\newcommand{\om}{\omega}
\newcommand{\DD}{{\cal D}}
\newcommand{\EE}{{\cal E}}
\newcommand{\FF}{{\cal F}}
\newcommand{\GG}{{\cal G}}
\newcommand{\HH}{{\cal H}}
\newcommand{\UU}{{\cal U}}
\newcommand{\VV}{{\cal V}}
\newcommand{\lag}{\langle}
\newcommand{\rag}{\rangle}
\newcommand{\dd}{{\textup d}}
\theoremstyle{plain}
\newtheorem{theorem}{Theorem}[section]
\newtheorem{lemma}[theorem]{Lemma}
\newtheorem{proposition}[theorem]{Proposition}
\newtheorem{condition}[theorem]{Condition}
\theoremstyle{remark}
\newtheorem{remark}[theorem]{Remark}
\newcommand{\de}{\delta}
\numberwithin{equation}{section}
\begin{document}

\date{}
\title{Global exact controllability in
infinite time of    Schr\"odinger equation: multidimensional case}

\date{}

\author{Vahagn Nersesyan, Hayk
Nersisyan}
 \maketitle

{\small\textbf{Abstract.}  We prove that the multidimensional Schr\"odinger equation is exactly controllable in infinite time near any point which is a finite linear combination of eigenfunctions of the Schr\"odinger operator. We prove that, generically with respect to the potential, the linearized system is controllable in infinite time.  Applying the inverse mapping theorem, we prove the controllability of the nonlinear system. }\\

    \tableofcontents

\section{Introduction}\label{S:intr}
This paper is concerned with the problem of  controllability for the following Schr\"odinger equation
\begin{align}
i\dot z &= -\Delta z+V(x)z+ u(t) Q(x)z,\,\,\,\,x\in D, \label{E:hav1}\\
z\arrowvert_{\partial D}&=0,\label{E:ep1}\\
 z(0,x)&=z_0(x),\label{E:sp1}
\end{align}  where   $D\subset\R^d, d\ge1$  is a rectangle, $V,Q \in C^\ty(\overline{D},\R)$ are   given functions,  $u$ is the control,
and $z$ is the state. We prove that  (\ref{E:hav1})-(\ref{E:sp1}) is exactly controllable in infinite time near any point which is  a finite linear combination of eigenfunctions of the Schr\"odinger operator, extending the results of \cite{NN1} to the multidimensional case.
   
   Recall that in the papers \cite{BCH,BeCo, KBCL} it is proved that the 1D Schr\"odinger equation is exactly controllable in finite time in  a neighborhood  of  any  finite linear combination  of eigenfunctions of Laplacian. In 
\cite{CH, PISI, PMMS},      approximate    controllability   in~$L^2$ is proved for multidimensional  Schr\"odinger equation, generically with respect to     functions   $V, Q$ and   domain $D$. In \cite{Mir, BM, VaNe, VN,MM11},    stabilization results and        approximate controllability properties are proved.  In particular,  combination  of the    results   of \cite{VaNe}
with the above mentioned local exact controllability properties   gives   global exact controllability   in finite time for 1D case in the spaces $H^{3+\e}, \e>0$.
  See also  
papers  \cite{RSDR,TR,ALT,ALAL,AC,BCMR}     for controllability of finite-dimensional
systems and   papers
\cite{L,MZ,BP,Z,DGL,PERV} for controllability properties of   various  Schr\"odinger systems.

 The linearization   of   (\ref{E:hav1})-(\ref{E:sp1}) around   the trajectory $e^{-i\la_{k,V}t}e_{k,V}$ with   $u=0$ and $z_0=e_{k,V}$ ($e_{k,V}$ is an eigenfunction of the Schr\"odinger operator $-\Delta +V$ corresponding to some eigenvalue $\la_{k,V}$) is of the form
\begin{align}
i\dot z &= -\Delta z+V(x)z+ u(t) Q(x)e^{-i\la_{k,V}t}e_{k,V},\,\,\,\,x\in D, \label{E:havZ1}\\
z\arrowvert_{\partial D}&=0,\label{E:epZ1}\\
z(0,x)&=0.\label{E:spZ1}
\end{align}  Writing this in the Duhamel form
 \begin{equation}\label{E:lpos}z(T)= -i\int_0^T S(T-s)[ u(s) Qe^{-i\la_{k,V}s}e_{k,V}]\dd s,
\end{equation}  where $S(t)=e^{it(\Delta-V)}$
is the free evolution, we see that (\ref{E:havZ1})-(\ref{E:spZ1}) is equivalent to the following moment problem for $d_{mk}:=\frac{ie^{i\la_{m,V}T}}{\langle
Qe_{m,V },e_{k,V }\rangle}\lag z(T), e_{m,V}\rag$
\begin{equation}\label{E:gczqs}
d_{mk}=  
\int^T_0 e^{i\om_{mk} s} u(s)\dd s,\,m\ge1, \quad\om _{mk}=\la_{m,V}-\la_{k,V}.
\end{equation} It is well known that a gap condition for the frequencies $\om _{mk}$ 
 is necessary  for the solvability of this moment problem when $T<+\ty$ (e.g., see \cite{Y80}). The asymptotic formula for the eigenvalues $\la_{m,V}\sim C_dm^{\frac{2}{d}} $ implies that there is no gap in the case $d\ge3$ (when $d=2$, existence of a domain for which there is a gap between the eigenvalues is an open problem). Moreover, it follows from \cite{A74}    that  there is a linear dependence between the exponentials: there is a non-zero $\{c_m\}\in\ell^2$ such that $\sum_{m=1}^{+\ty}c_me^{i\om_{mk} s} =0$ for $t\in [0,T]$. Hence  (\ref{E:havZ1})-(\ref{E:spZ1}) is non-controllable in finite time $T<+\ty$. The situation is different when $T=+\ty$. Indeed, by Lemma 3.10 in \cite{VN},  the exponentials are independent on  $[0,+\ty)$, and moreover, (\ref{E:havZ1})-(\ref{E:spZ1}) is controllable, by Theorem 2.6 in \cite{NN1}. In \cite{NN1}, we used the controllability of linearized system (\ref{E:havZ1})-(\ref{E:spZ1}) to prove the controllability of nonlinear system only in the case $d=1$. In the multidimensional case, we were able to prove the controllability of (\ref{E:havZ1})-(\ref{E:spZ1})  in a more regular Sobolev space   than the one where nonlinear system (\ref{E:hav1})-(\ref{E:sp1}) is well posed. We do not know if this difficulty of loss of regularity can be treated using  the Nash--Moser  inverse function theorem in the spirit of \cite{BCH}. More precisely, in the multidimensional case, it is very difficult to prove that the inverse of the linearization satisfies the estimates in the   Nash--Moser  theorem. 
   In this paper, we find a space $\HH$ (see (\ref{E:ddsah}) for the definition), where the nonlinear problem is well posed and the linearized problem is controllable.  Applying the inverse inverse function theorem in the space $\HH$, we get controllability for (\ref{E:hav1})-(\ref{E:sp1}). Let us notice that $\HH$ is a sufficiently large space of functions, it contains the Sobolev space $H^{3d}$. Thus, in particular, we prove controllability in $H^{3d}$.
The result of this paper is optimal in the sense that it seems that the multidimensional Schr\"odinger equation  (\ref {E:hav1})-(\ref{E:sp1}) is not exactly controllable in finite time. 

\vspace{10pt}

\textbf{Acknowledgments.}  The authors would like to thank J.-P Puel for providing them in privat communication \cite{JPP}  some results about      regularity questions for the Schr\"odinger equation.

\vspace{22pt} \textbf{Notation}\\\\ In this paper, we use the
following notation. 
Let us define the Banach spaces \begin{align*}
\ell^2&:=\{\{a_j\}\in \C^\ty: \|\{a_j\}\|_{\ell^2}^2=\sum_{j=1}^{+\ty} |a_j|^2<+\ty\}, \\
\ell^2_0&:=\{\{a_j\}\in \ell^2: a_1\in \R\},\\
\ell^\ty&:=\{\{a_j\}\in \C^\ty: \|\{a_j\}\|_{\ell^\ty}=\sup_{j\ge1}  |a_j|<+\ty\},\\
\ell^\ty_0&:=\{\{a_j\}\in \ell^\ty:  \lim_{j\ri+\ty} a_j=0  \},\\
\ell^\ty_{01}&:=\{\{a_j\}\in \ell^\ty_0:  a_1\in \R  \}.
\end{align*}  
  We denote by  $H^s:=H^s(D)$    the Sobolev space of order $s\ge0$.
Consider the Schr\"odinger operator  $ -\Delta +V $, $V \in C^\ty(\overline{D},\R)$  with $ \DD(- \Delta +V):=H_0^1
\cap H^2 $. Let
$\{\la_{j,V} \}$ and $\{e_{j,V} \}$ be the sets of eigenvalues and
normalized eigenfunctions  of this operator.  Let $\langle\cdot,\cdot\rangle$ and~$\|\cdot\|$ be the scalar
product and the norm in the space $L^2 $. Define the space~$H^s_{(V)}:=D((-\Delta+V)^\frac{s}{2})$ endowed with the norm $~\|\cdot\|_{s,V}= \|(\la_{j,V})^\frac{s}{2} \langle\cdot,e_{j,V}\rangle\|_{\ell^2}$.
 When $D$ is the rectangle  $(0,1)^d $      and $V  (x_1,\ldots,x_d) =V_{1} (x_1)+\ldots+ V_{d} (x_d)  $, $V_k   \in C^\ty( [0,1],\R)$,   the eigenvalues and eigenfunctions of $ -\Delta +V $ on $D$ are of the form  
\begin{align}
\la_{j_1,\ldots, j_d,V}&=\la_{j_1,V_1}+\ldots+\la_{j_d,V_d},\label{E:j1}\\
e_{j_1,\ldots, j_d,V}(x_1,\ldots,x_d)&=e_{j_1,V_1}(x_1)\cdot\ldots\cdot e_{j_d,V_d}(x_d),\,\,\,\,\, (x_1,\ldots,x_d)\in D, \label{E:j2}
\end{align}where $\{\la_{j,V_k} \}$ and $\{e_{j,V_k} \}$ are the     eigenvalues and
  eigenfunctions  of operator $-\frac{\dd^2}{\dd x^2}+V_k$  on $(0,1)$.    
 Define the spaces
 \begin{align}\label{E:ddsah}
 \HH&=\{z\in   L^2:( j_1^3\cdot\ldots\cdot j_d^3)  \lag z,  e_{{j_1,\ldots, j_d,V}}   \rag \in \ell_0^\ty,\nonumber\\&\quad\quad\quad\quad \quad\quad  \| z \|_{\HH}:=  \|( j_1^3\cdot\ldots\cdot j_d^3)  \lag z, e_{{j_1,\ldots, j_d,V}}   \rag\|_{\ell^\ty}<+\ty\}, \\
 \VV&=\{z\in L^2:\| z \|_{\VV} :=\!\!\!\!\!\!\sum_{j_1,\ldots,j_d=1}^{+\ty}\!\!\!  (  j_1^3\cdot\ldots\cdot j_d^3 )|\lag z, e_{{j_1,\ldots, j_d,V}}   \rag| <+\ty\} .\label{E:ffaz}
\end{align}   
    The eigenvalues and eigenfunctions of Dirichlet Laplacian on the interval  $(0,1)$ are   $\la_{k,0}=   k^2\pi   ^2$ and $e_{k,0} (x)=\sqrt {2}  \sin  ( {k\pi} x  ) $, $x\in(0,1)$.  It is well known that for any $V\in L^2([0,1],\R)$
 \begin{align}
& \la_{k,V}=k^2\pi^2+\int_0^1 V(x)\dd x+r_k,\label{E:app1}\\
 & \|e_{k,V}-e_{k,0}\|_{L^\ty}\le \frac{C}{k},\label{E:app2}\\
 &  \Big\|\frac{\dd e_{k,V}}{\dd x}-\frac{\dd e_{k,0}}{\dd x} \Big\|_{L^\ty}\le C,\label{E:app3}
 \end{align} where $\sum_{k=1}^{+\ty}r_k^2<+\ty$ (e.g., see \cite{PT}).
   For a Banach space $X$, we shall denote by $B_X(a, r)$ the open ball of radius $r > 0$ centered at  $a\in X$. The integer part of $x\in\R$ is denoted by $[x]$.   We denote by $C$ a constant whose value may change from line to line.

\section{Main results}\label{S:NL}

\subsection{Well-posedness of Schr\"odinger equation}\label{S:APROX1}

We assume that
$V  (x_1,\ldots,x_d) =V_{1} (x_1)+\ldots+ V_{d} (x_d), x_k\in[0,1] $ and $V_k   \in C^\ty( [0,1],\R) , k=1,\ldots,d $.
Let us   consider the following   Schr\"odinger equation
\begin{align}
i\dot z &= -\Delta z+V(x)z+ u(t) Q(x)z+v(t)Q(x)y,\,\,\,\, \label{E:hav5}\\
z\arrowvert_{\partial D}&=0,\label{E:ep5}\\
 z(0,x)&=z_0(x).\label{E:sp5}
\end{align}  
The following lemma shows the well-posedness of this system in $H_{(V)}^2$.
\begin{lemma}\label{L:LD}

For any   $z_0\in H_{(V)}^2 $,      
  $u,v\in L^1_{loc}([0,\ty),\R)$ and $y\in L^1([0,\infty), H_{(V)}^2)$
  problem
(\ref{E:hav5})-(\ref{E:sp5}) has a unique solution   $ z\in C([0,\infty), H_{(V)}^2) $. 
Furthermore, if $v=0$, then    for all $t\ge0$ we have
\begin{align}
\|z(t)\| =\|z_0\|.    \label{E:barev} 
\end{align}
\end{lemma}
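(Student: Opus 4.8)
The plan is to prove this well-posedness result via the standard semigroup/Duhamel approach combined with a fixed-point argument. The free Schrödinger evolution $S(t) = e^{it(\Delta - V)}$ is a unitary group on $L^2$, and crucially it commutes with the operator $-\Delta + V$, so it acts as a unitary group on each $H^s_{(V)}$ as well (since $H^s_{(V)} = D((-\Delta+V)^{s/2})$ is defined spectrally). In particular, $S(t)$ preserves the $H^2_{(V)}$ norm. The first step is to rewrite (\ref{E:hav5})-(\ref{E:sp5}) in Duhamel (mild) form,
\begin{equation}\label{E:duhamelplan}
z(t) = S(t)z_0 - i\int_0^t S(t-s)\big[u(s)Q z(s) + v(s)Q y(s)\big]\dd s,
\end{equation}
and seek a fixed point of the associated map on $C([0,T], H^2_{(V)})$ for small $T$.

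**Next I would** establish the key mapping estimate: multiplication by $Q \in C^\infty(\overline{D},\R)$ is a bounded operator on $H^2_{(V)}$. This is where the structure of $V$ matters, but since $H^2_{(V)} = H^1_0 \cap H^2$ with equivalent norm (the domain of $-\Delta + V$), and $Q$ is smooth with all derivatives bounded on the compact set $\overline{D}$, multiplication by $Q$ maps $H^2 \cap H^1_0$ into itself boundedly; one checks that $Q z$ preserves the Dirichlet boundary condition and estimates $\|Qz\|_{H^2}$ by the product rule using $\|Q\|_{C^2}$. Granting $\|Qw\|_{2,V} \le C\|w\|_{2,V}$, the integrand in (\ref{E:duhamelplan}) is controlled: for the fixed-point map $\Phi$ one gets
\begin{equation}\label{E:contractplan}
\|\Phi(z_1) - \Phi(z_2)\|_{C([0,T],H^2_{(V)})} \le C\Big(\int_0^T |u(s)|\dd s\Big)\,\|z_1 - z_2\|_{C([0,T],H^2_{(V)})},
\end{equation}
using unitarity of $S(t-s)$ on $H^2_{(V)}$. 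Since $u \in L^1_{loc}$, the factor $\int_0^T |u(s)|\dd s$ tends to $0$ as $T \to 0$, so $\Phi$ is a contraction on a small time interval; the inhomogeneous $v(s)Qy(s)$ term is handled by $y \in L^1([0,\infty), H^2_{(V)})$. This yields a unique local solution, and because the estimates are linear in $z$ (no superlinear growth), one extends to all of $[0,\infty)$ by iterating on successive intervals — the a priori bound is linear, so no blow-up occurs and the solution is global, lying in $C([0,\infty), H^2_{(V)})$.

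**For the norm conservation** (\ref{E:barev}) when $v = 0$, I would argue formally by taking the $L^2$ inner product of (\ref{E:hav5}) with $z$: the term $\langle -\Delta z + Vz, z\rangle$ is real (self-adjointness of $-\Delta + V$), and $\langle u(t)Qz, z\rangle = u(t)\langle Qz, z\rangle$ is real since $u$ and $Q$ are real-valued, so $\frac{\dd}{\dd t}\|z\|^2 = 2\Re\langle \dot z, z\rangle = 2\Re(-i)\langle \cdots, z\rangle = 0$. This computation is rigorous on the dense domain where everything is classical; for general $z_0 \in H^2_{(V)}$ it passes to the limit by the continuity of the solution in $H^2_{(V)} \hookrightarrow L^2$ and a standard approximation/regularization argument.

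**The main obstacle** I anticipate is verifying cleanly that multiplication by $Q$ is bounded on $H^2_{(V)}$ while respecting the boundary condition, i.e. that $Qz$ stays in $D(-\Delta+V) = H^1_0 \cap H^2$. Since $z$ vanishes on $\partial D$ and $Q$ is smooth, $Qz$ vanishes on $\partial D$, so the Dirichlet condition is preserved; the genuine work is the commutator-type estimate controlling $\|(-\Delta+V)(Qz)\|$ by $\|(-\Delta+V)z\|$, which amounts to bounding $\|[\Delta, Q]z\| = \|(\Delta Q)z + 2\nabla Q\cdot\nabla z\|$ using the smoothness of $Q$ and elliptic regularity to absorb the first-order term. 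Everything else is routine semigroup theory; the only subtlety is ensuring the contraction constant genuinely uses the $L^1_{loc}$ (rather than $L^\infty$) regularity of the controls, which the estimate (\ref{E:contractplan}) above does.
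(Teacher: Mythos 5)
The paper offers no proof of this lemma at all: it simply refers the reader to Cazenave \cite{CW}. So there is no in-paper argument to compare yours against; what your proposal does is supply the standard proof that this citation implicitly invokes, and it is correct. The Duhamel reformulation, the fact that $S(t)$ is an isometry of $H^2_{(V)}$ (immediate because $H^2_{(V)}$ is defined spectrally and $S(t)$ commutes with $-\Delta+V$), the boundedness of multiplication by $Q$ on $H^1_0\cap H^2$, and the contraction on any interval where $C\int|u|\,\dd s<1$ --- such intervals cover $[0,\ty)$ by absolute continuity of the integral of $u\in L^1_{loc}$, and linearity rules out blow-up --- together give global existence and uniqueness of the mild solution in $C([0,\ty),H^2_{(V)})$. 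Two small remarks. First, your identification $H^2_{(V)}=H^1_0\cap H^2$ uses elliptic regularity on the rectangle $D$; this is legitimate because $D$ is convex, and in any case the paper takes $\DD(-\Delta+V)=H^1_0\cap H^2$ as the \emph{definition} of the domain, so nothing needs to be checked here. Second, for the conservation law (\ref{E:barev}) you do not need the density/regularization step you sketch: for $z_0\in H^2_{(V)}$ the solution is already strong enough, since $\dot z=-i(-\Delta+V+u(t)Q)z\in L^1_{loc}([0,\ty),L^2)$, so $t\mapsto\|z(t)\|^2$ is absolutely continuous and its a.e.\ derivative $2\Re\langle\dot z,z\rangle=2\Im\langle(-\Delta+V+u(t)Q)z,z\rangle$ vanishes by self-adjointness of $-\Delta+V+u(t)Q$ for a.e.\ fixed $t$; integrating gives (\ref{E:barev}) directly.
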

See  \cite{CW} for the proof.  In \cite{KBCL} it is proved that this problem is well posed in $H^3_{(V)}$ for $d=1$, and in \cite{JPP} the well-posedness in $H^3_{(V)}$  is proved for $d\ge1$. 

 For any integer  $l\ge3$, let $m=m(l):=[\frac{l-1}{2}]$ and define the space 
$$
C^m_0:=\{u\in C^m([0,\ty),\R): \frac{\dd ^ku}{\dd t^k} (0)=0,k\in[0,m]\}
$$
endowed with the norm of $C^m([0,\ty),\R)$.
The following lemma shows that   
problem (\ref{E:hav5})-(\ref{E:sp5}) is well posed in  higher Sobolev spaces when $u,v$ and $y$ are more regular.

\begin{lemma}\label{L:LmpD}

For any integer $l\ge3$,   any  $z_0\in H^l_{(V)}$, any  $y\in W^{m,1}_{loc}([0,\ty),H^2_{(V)})$ and any  $u,v\in C^{m}_{0} $   the solution $z$ in Lemma \ref{L:LD}
 belongs to the space  $ 
C([0,\infty),H^l)\cap C^1([0,\infty),H^{l-2})$. Moreover,    there is a constant $C>0$ such that  
\begin{align}\label{E:gnar}
\|z(t)\|_{H^l}+\|z\|_{W^{m,1} ([0,t],H^2_{(V)})}\le &C(\|z_0\|_{l,V}+\|v\|_{C^m_0 }\|y\|_{W^{m,1} ([0,t],H^2_{(V)})}) \nonumber\\&\times e^{C(\|u\|_{C^m_0 }+1)t}.
\end{align}
 \end{lemma}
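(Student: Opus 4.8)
The plan is to trade temporal regularity against spatial regularity. Throughout write $A:=-\Delta+V$, so that $H^s_{(V)}=\DD(A^{s/2})$ and $S(t)=e^{-iAt}$; after the harmless replacement of $V$ by $V+\const$ (which only multiplies $z$ by a time-dependent phase and changes none of the spaces) we may assume $A\ge I$. The decisive point is that multiplication by the smooth function $Q$ does \emph{not} preserve the operator domains $\DD(A^{s/2})$ (the iterated Dirichlet conditions fail on $\p D$), so the naive bound obtained by applying $A^{l/2}$ to the Duhamel formula is unavailable — this is exactly the loss of regularity mentioned in the introduction. I would therefore avoid fractional powers of $A$ and instead differentiate the equation in time (which commutes with $A$ and with multiplication by $Q$), converting the resulting temporal bounds into spatial ones by elliptic regularity for the separable operator $-\Delta+V$ on the rectangle, whose eigenfunctions are the products (\ref{E:j2}) of the smooth one-dimensional eigenfunctions and on which multiplication by $Q$ \emph{is} bounded in every $H^s(D)$. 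All estimates are first proved for smooth data and then extended by density together with the uniform bound (\ref{E:gnar}).

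First I would set $z_j:=\p_t^j z$ and differentiate (\ref{E:hav5}) $j$ times, obtaining for each $j$ an equation of the same type,
\begin{equation*}
i\dot z_j=Az_j+uQz_j+f_j,\qquad f_j:=Q\sum_{i<j}\binom{j}{i}u^{(j-i)}z_i+Q\sum_{i\le j}\binom{j}{i}v^{(j-i)}y^{(i)} .
\end{equation*}
Because $u,v\in C^m_0$, every forcing term and its relevant $t$-derivatives vanish at $t=0$, so the compatibility relations $z_j(0)=(-iA)^j z_0$ hold and $\|z_j(0)\|=\|z_0\|_{2j,V}\le\|z_0\|_{l,V}$ for $j\le m$ (using $2m\le l-1$). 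Treating $f_j$ as an $L^1_{loc}([0,\ty),H^2_{(V)})$ source built from the already-controlled lower-order terms $z_i$ and from the data, I would apply Lemma \ref{L:LD} successively for $j=0,1,\dots,m$ to get $z_j\in C([0,\ty),H^2_{(V)})$, the Gronwall step in Lemma \ref{L:LD} producing the factor $e^{C(\|u\|_{C^m_0}+1)t}$. This already controls the second term on the left of (\ref{E:gnar}).

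Next I would upgrade the spatial regularity by a finite induction. Writing $Az_j=iz_{j+1}-uQz_j-f_j$ and invoking elliptic regularity for $-\Delta+V$ on the rectangle together with the homogeneous Dirichlet condition $z_j|_{\p D}=0$, one has $\|z_j\|_{H^{s+2}}\le C(\|Az_j\|_{H^s}+\|z_j\|)$; since $Q\in C^\ty(\overline D)$ maps $H^s(D)$ into itself, each such step raises the spatial regularity of $z_j$ by two derivatives, the order of the right-hand side being limited only by that of $z_{j+1}$ and of the source. Thus the bottom level $z_m\in H^2$ lifts successively to $z_{m-1}\in H^4,\dots,z_0\in H^{2m+2}$, and since $2m+2\ge l$ by the choice $m=[\frac{l-1}{2}]$ this delivers $z=z_0\in C([0,\ty),H^l)$. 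The equation then gives $\dot z=-i(Az+uQz+vQy)\in H^{l-2}$, whence $z\in C^1([0,\ty),H^{l-2})$, and collecting the bounds from the previous step furnishes (\ref{E:gnar}).

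The main obstacle is precisely this loss of regularity. Everything is arranged so that $A$ never acts on a product $Qz$ or $Qy$ through a fractional power, but only through the second-order operator $-\Delta+V$ (for which the elliptic estimates on the separable rectangle are classical and commute harmlessly with multiplication by $Q$) and through integer powers tied to time-differentiation. The delicate bookkeeping is to verify that at each stage the forcing $f_j$ — and in particular the externally prescribed part built from $v$ and $y$ — is controlled to the order needed to push the induction one further step, and that the compatibility conditions encoded in $C^m_0$ make $z_j(0)=(-iA)^jz_0$, so that the initial datum supplies exactly the $H^l$-regularity being propagated.
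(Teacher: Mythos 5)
The paper itself gives no argument for this lemma---it just cites the Appendix of \cite{BCH}---and your overall strategy (differentiate in time, use the compatibility conditions built into $C^m_0$ to get $z_j(0)=(-iA)^jz_0$ for $z_j:=\p_t^jz$, then trade time derivatives for space derivatives via elliptic regularity) is indeed the standard route behind such results and matches the spirit of that reference. But your execution has a concrete off-by-one error that already breaks the base case $l=3$. To apply Lemma~\ref{L:LD} to the equation for $z_j$ you need $z_j(0)=(-iA)^jz_0\in H^2_{(V)}$, i.e.\ $z_0\in H^{2j+2}_{(V)}$; the bound you actually check, $\|z_j(0)\|=\|z_0\|_{2j,V}$, is only an $L^2$ bound and is not what Lemma~\ref{L:LD} requires. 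From $z_0\in H^l_{(V)}$ one only gets $z_j(0)\in H^{l-2j}_{(V)}$, so at the top level $j=m$ with $l$ odd (e.g.\ $l=3$, $m=1$) the datum $z_m(0)$ lies in $H^1_{(V)}$, not $H^2_{(V)}$. The symptom is your conclusion $z\in C([0,\infty),H^{2m+2})$: for odd $l$ this is $H^{l+1}$, a gain of one full derivative over the data, which is impossible---take $u=v=0$ and $z_0\in H^l_{(V)}\setminus H^{l+1}$; the free flow preserves $H^l_{(V)}$ exactly, and at $t=0$ your claim would force $z_0\in H^{l+1}$. Repairing this requires running the bottom level of the induction in $H^1_{(V)}=H^1_0$ (where multiplication by $Q$ is still bounded, but which Lemma~\ref{L:LD} as stated does not cover), or invoking a smoothing lemma for the Duhamel integral in the style of \cite{KBCL} and of Proposition~\ref{L:lav}.

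The second, more serious, gap is that the elliptic lift is capped by the \emph{spatial} regularity of the forcing, and your induction never gets past this. In $Az_j=iz_{j+1}-uQz_j-f_j$, the part of $f_j$ coming from the external source, namely $Q\sum_{i\le j}\binom{j}{i}v^{(j-i)}y^{(i)}$, lies only in $H^2$ in space: the hypothesis on $y$ gives $y^{(i)}(t)\in H^2_{(V)}$ and nothing more, no matter how many time derivatives one takes. Hence at every stage of your bootstrap $Az_j\in H^2$ at best, and the lift stalls at $H^4$: the scheme yields $z\in C([0,\infty),H^4)$ and cannot reach $H^l$ for any $l\ge5$. (When $v\equiv0$ the multi-pass bootstrap does close, because then every term of $f_j$ is one of the $z_i$, whose regularity improves along with that of $z_j$; it is precisely the $y$-terms that never improve.) Your phrase ``limited only by that of $z_{j+1}$ and of the source'' acknowledges the obstruction but does not resolve it, and it cannot be resolved by bookkeeping: one application of $A^{-1}$ to $vQy$ exhausts all the spatial regularity the hypothesis provides. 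To go further one needs spatial hypotheses on $y$ that grow with $l$ (this is how the regularity results in the Appendix of \cite{BCH} are organized), or one must exploit the fact that in all applications in this paper $y$ is itself a solution of a Schr\"odinger equation with $H^l_{(V)}$ data and therefore carries the corresponding spatial regularity. As written, your argument establishes the stated conclusion essentially only for $l=4$ (and for $l=3$ after the first gap is repaired); the cases $l\ge5$ remain unproved.
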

See  Appendix of \cite{BCH} for the proof. 
\begin{lemma}\label{L:oe}
Denote by
$\UU_t(\cdot,\cdot): H^{2}_{(V)}\times L^1_{loc}(\R_+,\R)  \ri  H^{2}_{(V)}$ the resolving operator of
(\ref{E:hav1}), (\ref{E:ep1}). Then $\UU_t(\cdot,\cdot)$ is locally Lipschitz continuous:    
  there is $C>0$ such that
\begin{align}\label{E:ppz}
 \|\UU_t(z_0,u)-\UU_t(z_0',u')\|_{H^l}\le   C(\|z_0-z_0'\|_{l, V}+\|u-u'\|_{C^m_0 } \|z_0'\|_{l,V} ) e^{C(\|u\|_{C^m_0 }+1)t}.
\end{align} \end{lemma}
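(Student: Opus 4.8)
The plan is to prove Lemma \ref{L:oe} as a corollary of Lemma \ref{L:LmpD}. Note that $\UU_t(z_0,u)$ is exactly the solution given by Lemma \ref{L:LmpD} with $v=0$ (hence the term $v(t)Q(x)y$ drops out, so $y$ plays no role). The quantity I must control is the difference of two solutions corresponding to two sets of data $(z_0,u)$ and $(z_0',u')$. Writing $z:=\UU_t(z_0,u)$ and $z':=\UU_t(z_0',u')$, the natural idea is to set $w:=z-z'$ and derive the equation that $w$ satisfies, then apply the a priori estimate \eqref{E:gnar} to $w$.

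Let me verify which equation $w$ solves. Since $z$ solves \eqref{E:hav1} with control $u$ and data $z_0$, and $z'$ solves it with control $u'$ and data $z_0'$, subtracting gives

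Let me think about this more carefully. Subtracting the two equations:
\[
i\dot w = -\Delta w + V w + u Q z - u' Q z'.
\]
I rewrite the inhomogeneity by adding and subtracting:
\[
u Q z - u' Q z' = u Q(z-z') + (u-u')Q z' = u Q w + (u-u')Q z'.
\]
Thus $w$ solves a problem of the form \eqref{E:hav5}-\eqref{E:sp5}, with the same control $u$ multiplying $Qw$, with initial data $w(0)=z_0-z_0'$, and with the extra source term $(u-u')Qz'$. Identifying this source with the term $v(t)Q(x)y$ of \eqref{E:hav5}, I take $v:=u-u'\in C^m_0$ and $y:=z'\in C([0,\infty),H^l)$, which by Lemma \ref{L:LmpD} lies in $W^{m,1}_{loc}([0,\ty),H^2_{(V)})$ as required. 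Now I apply estimate \eqref{E:gnar} to $w$: the left side dominates $\|w(t)\|_{H^l}=\|\UU_t(z_0,u)-\UU_t(z_0',u')\|_{H^l}$, while the right side reads
\[
C\bigl(\|z_0-z_0'\|_{l,V}+\|u-u'\|_{C^m_0}\,\|z'\|_{W^{m,1}([0,t],H^2_{(V)})}\bigr)e^{C(\|u\|_{C^m_0}+1)t}.
\]

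The remaining step is to bound the factor $\|z'\|_{W^{m,1}([0,t],H^2_{(V)})}$ by $\|z_0'\|_{l,V}$ up to the same exponential factor, so that the final estimate matches \eqref{E:ppz}. This is exactly what \eqref{E:gnar} provides when applied to $z'$ itself (with $v=0$, so the $\|v\|_{C^m_0}\|y\|$ term vanishes): one gets $\|z'\|_{W^{m,1}([0,t],H^2_{(V)})}\le C\|z_0'\|_{l,V}e^{C(\|u'\|_{C^m_0}+1)t}$. The main technical point — and the only place where care is needed — is the bookkeeping of the exponential factors: the bound for $w$ carries $e^{C(\|u\|_{C^m_0}+1)t}$ while the bound for $z'$ carries $e^{C(\|u'\|_{C^m_0}+1)t}$, and I must absorb both into a single factor of the stated form. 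This is harmless on bounded controls, and more cleanly handled by restricting to a neighborhood (the statement only claims \emph{local} Lipschitz continuity), so that $\|u'\|_{C^m_0}$ is controlled by $\|u\|_{C^m_0}$ plus a constant; then both exponentials merge into $e^{C(\|u\|_{C^m_0}+1)t}$ after enlarging $C$. With this substitution and the two applications of \eqref{E:gnar}, the estimate \eqref{E:ppz} follows.
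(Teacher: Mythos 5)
Your proof is correct and follows essentially the same route as the paper: both write the difference $\UU_t(z_0,u)-\UU_t(z_0',u')$ as a solution of \eqref{E:hav5}--\eqref{E:sp5} with control $u$, source $v=u-u'$, $y=\UU_t(z_0',u')$, and then apply \eqref{E:gnar} twice (once to the difference, once to $\UU_t(z_0',u')$ itself). Your extra remark about reconciling the exponential factors $e^{C(\|u'\|_{C^m_0}+1)t}$ versus $e^{C(\|u\|_{C^m_0}+1)t}$ is a point the paper glosses over (its estimate \eqref{E:gndar2} silently carries $\|u\|$ where the direct application gives $\|u'\|$), and your local-absorption fix is the right way to handle it.
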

\begin{proof}
Notice that $z(t):=\UU_t(z_0,u)-\UU_t(z_0',u')$ is a solution of problem \begin{align*}
i\dot z &= -\Delta z+V(x)z+ u(t) Q(x)z+(u(t)-u'(t))Q(x) \UU_t(z_0',u'),  \\
z\arrowvert_{\partial D}&=0, \\
 z(0,x)&=z_0(x)-z_0'(x).  
\end{align*}  Applying Lemma  \ref{L:LmpD}, we get 
\begin{align}
&\|z(t)\|_{H^l} \le C(\|z_0-z_0'\|_{l,V}+\|u-u'\|_{C^m_0 }\|\UU_\cdot(z_0',u')\|_{W^{m,1} ([0,t],H^2_{(V)})}) e^{C(\|u\|_{C^m_0 }+1)t},\label{E:gndar}
\\ &\|\UU_\cdot(z_0',u')\|_{W^{m,1} ([0,t],H^2_{(V)})}\le  C \|z_0'\|_{l,V}  e^{C(\|u\|_{C^m_0 }+1)t}.\label{E:gndar2}
\end{align}
Replacing (\ref{E:gndar2}) into (\ref{E:gndar}), we get (\ref{E:ppz}).
\end{proof}
  Let us rewrite (\ref{E:hav1})-(\ref{E:sp1})  in the Duhamel form
\begin{equation}\label{E:DU}
z(t)=S(t) z_0-i\int_0^tS(t-s)[u(s)Qz(s)]\dd s,
\end{equation}
where $S(t)=e^{it(\Delta-V)}$
is the free evolution.
 Let us take any $w\in L^1(\R_+,\R)$ and estimate the following integral
$$
G_t(z):=\int_0^tS(-s) [w(s)Qz(s)]\dd s.
$$  We take controls from the weighted space space 
$$
\GG:=\{u\in L^1(\R_+,\R): u(\cdot)e^{B\cdot}\in L^1(\R_+,\R) \}
$$endowed with the norm $\|u\|_\GG=\|u(\cdot)e^{B\cdot}\|_{L^1}$, where   the constant $B>0$ will be chosen later.  For $B>C+1$, where $C$ is the constant in Lemma \ref{L:LmpD}, we have the following result.
 \begin{proposition}\label{L:lav}Let us take  any     $l\ge {4d}$,
     $z_0\in H^l_{(V)}, w\in\GG$ and $u \in    C^m_0 $, and let $z(t):=\UU_t(z_0,u).$  Then there are  constants $\de, C>0$ such that for any $u\in B_{ C^m_0}(0,\de) $ and for any $t>s\ge0$
        \begin{align}
\| G_t(z)- G_s(z)\|_\HH\le C \int_s^t \|z(\tau)\|_{H^l} |w(\tau)|\dd \tau,  \label{E:S12}
\end{align} 
     and  the following integral converges in $\HH$
    \begin{align}
G_\ty(z):=\int_0^{+\ty}S(-\tau)[w(\tau)Qz(\tau)]\dd \tau.
  \label{E:S1}
\end{align}   \end{proposition}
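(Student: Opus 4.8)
The plan is to reduce the $\HH$-bound (\ref{E:S12}) to a multiplication estimate that is uniform in time, and then to read off the convergence of (\ref{E:S1}) from the exponential weight defining $\GG$. First I would expand $G_t(z)-G_s(z)$ in the eigenbasis $\{e_{j_1,\ldots, j_d,V}\}$. Since $-\Delta+V$ is self-adjoint and $S(t)e_{j_1,\ldots, j_d,V}=e^{-it\la_{j_1,\ldots, j_d,V}}e_{j_1,\ldots, j_d,V}$, one has $\lag S(-\tau)h,e_{j_1,\ldots, j_d,V}\rag=\lag h,S(\tau)e_{j_1,\ldots, j_d,V}\rag=e^{i\tau\la_{j_1,\ldots, j_d,V}}\lag h,e_{j_1,\ldots, j_d,V}\rag$ for every $h\in L^2$, whence, with $h=w(\tau)Qz(\tau)$ and $w$ real,
\begin{equation*}
\lag G_t(z)-G_s(z),e_{j_1,\ldots, j_d,V}\rag=\int_s^t w(\tau)\,e^{i\tau\la_{j_1,\ldots, j_d,V}}\,\lag Qz(\tau),e_{j_1,\ldots, j_d,V}\rag\,\dd\tau .
\end{equation*}
Multiplying by $j_1^3\cdots j_d^3$, using $|e^{i\tau\la_{j_1,\ldots, j_d,V}}|=1$, estimating the modulus of the integral by the integral of the modulus, and taking the supremum over $(j_1,\ldots,j_d)$ inside the integral, I obtain
\begin{equation*}
\|G_t(z)-G_s(z)\|_\HH\le\int_s^t|w(\tau)|\,\|Qz(\tau)\|_\HH\,\dd\tau .
\end{equation*}
Hence (\ref{E:S12}) reduces to the time-independent claim that multiplication by $Q$ is bounded from $H^l$ into $\HH$ for $l\ge 4d$, namely $\|Qg\|_\HH\le C\|g\|_{H^l}$, applied to $g=z(\tau)$.

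The core of the proof is this multiplication estimate. By (\ref{E:ddsah}) it reads
\begin{equation*}
\sup_{j_1,\ldots,j_d\ge1}(j_1^3\cdots j_d^3)\,\bigl|\lag Qg,e_{j_1,\ldots, j_d,V}\rag\bigr|\le C\|g\|_{H^l}.
\end{equation*}
Here the product structure (\ref{E:j1})--(\ref{E:j2}) is essential: the weight $j_1^3\cdots j_d^3$ is a \emph{product}, so it has to be generated one coordinate at a time and cannot be obtained from powers of $-\Delta+V$, whose eigenvalues $\la_{j_1,\ldots, j_d,V}=\la_{j_1,V_1}+\ldots+\la_{j_d,V_d}$ are sums. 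Writing $A_k:=-\frac{\dd^2}{\dd x_k^2}+V_k$, the product of squares $j_1^2\cdots j_d^2\sim\pi^{-2d}\la_{j_1,V_1}\cdots\la_{j_d,V_d}$ is extracted cleanly: since $g$ (hence $Qg$) vanishes on $\partial D$ and the $A_k$ commute and are self-adjoint, Dirichlet boundary conditions alone give $\la_{j_1,V_1}\cdots\la_{j_d,V_d}\lag Qg,e_{j_1,\ldots, j_d,V}\rag=\lag (A_1\cdots A_d)(Qg),e_{j_1,\ldots, j_d,V}\rag$. It remains to produce the residual odd factor $j_1\cdots j_d$; for this I would replace each $e_{j_k,V_k}$ by $\sqrt2\sin(j_k\pi x_k)$, controlling the discrepancy by (\ref{E:app2})--(\ref{E:app3}), and integrate by parts once in each $x_k$.

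The main obstacle is precisely the extraction of this odd factor. The seemingly natural shortcut $j_1^3\cdots j_d^3\le C\,\la_{j_1,\ldots, j_d,V}^{3d/2}$ (AM--GM) followed by $\sup\le\ell^2$ would only bound $\|Qg\|_\HH$ by $\|Qg\|_{3d,V}$, and this fails: $Qg$ need not belong to $H^{3d}_{(V)}=\DD((-\Delta+V)^{3d/2})$, because multiplication by $Q$ destroys the boundary conditions encoded in the domains $H^s_{(V)}$, so $\|Qg\|_{3d,V}$ is \emph{not} dominated by $\|g\|_{H^{3d}}$. Consequently the single integration by parts used to gain each factor $j_k$ produces genuine boundary (face) traces of $(A_1\cdots A_d)(Qg)$, together with the $O(j_k^{-1})$ and $O(1)$ errors coming from (\ref{E:app2})--(\ref{E:app3}); estimating all of these uniformly in $(j_1,\ldots,j_d)$ by Sobolev trace and embedding inequalities is what costs one extra derivative per variable and forces the threshold $l\ge 4d$ rather than $3d$. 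The interior remainders, by contrast, are controlled by Bessel's inequality against the orthonormal products of sines and cosines.

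Finally, for the convergence of (\ref{E:S1}) I would use Lemma \ref{L:LmpD}: with $v=0$, (\ref{E:gnar}) gives $\|z(\tau)\|_{H^l}\le C\|z_0\|_{l,V}\,e^{C(\|u\|_{C^m_0}+1)\tau}$. Since $B>C+1$, fix $\de>0$ so small that $C(\|u\|_{C^m_0}+1)\le B$ whenever $u\in B_{C^m_0}(0,\de)$. Inserting this into the bound already proved yields
\begin{equation*}
\|G_t(z)-G_s(z)\|_\HH\le C\|z_0\|_{l,V}\int_s^t e^{B\tau}|w(\tau)|\,\dd\tau ,
\end{equation*}
and the right-hand side tends to $0$ as $s,t\to+\ty$ because $w\in\GG$ means $\int_0^{+\ty}e^{B\tau}|w(\tau)|\,\dd\tau=\|w\|_\GG<+\ty$. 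Thus $\{G_t(z)\}$ is Cauchy in the Banach space $\HH$, so (\ref{E:S1}) converges in $\HH$; taking $s=0$ in (\ref{E:S12}) shows moreover that each $G_t(z)$ already lies in $\HH$.
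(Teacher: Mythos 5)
There is a genuine gap, and it sits exactly at the definition (\ref{E:ddsah}) of $\HH$: membership in $\HH$ requires not only finiteness of the weighted sup-norm but also that the weighted coefficients $(j_1^3\cdots j_d^3)\lag \cdot,e_{j_1,\ldots,j_d,V}\rag$ belong to $\ell_0^\ty$, i.e.\ vanish as $j_1+\ldots+j_d\ri+\ty$. Your reduction---pulling the supremum inside the time integral and invoking the multiplication estimate $\|Qg\|_\HH\le C\|g\|_{H^l}$---can at best give the sup-norm bound; read as the assertion that $Qg\in\HH$, the multiplication estimate is \emph{false}. Indeed, already for $d=1$, $V=0$ and $g\in H^3\cap H^1_0$, three integrations by parts give
\begin{equation*}
j^3\lag g,e_{j,0}\rag=\frac{\sqrt2}{\pi^3}\bigl((-1)^jg''(1)-g''(0)\bigr)+o(1)\qquad(j\ri+\ty),
\end{equation*}
and with $g=Qz(\tau)$ one has $(Qz)''=2Q'z'+Qz''$ at the endpoints, generically nonzero; so the weighted coefficients of $Qz(\tau)$ are bounded but do \emph{not} decay, and $Qz(\tau)\notin\HH$ for fixed $\tau$. (In the paper's own computation this is visible in the term $I_{j,1}$, whose weighted leading part converges to nonzero corner values of $(-\p_{x_2}^2+V_2)\p_{x_1}^2(Qz)$.) Consequently your final Cauchy argument only yields convergence of $G_t(z)$ in the weighted $\ell^\ty$ space; since $\HH$ is a proper closed subspace of that space, you cannot conclude that $G_t(z)$, or its limit, lies in $\HH$, which is precisely what Proposition \ref{L:lav} asserts---and your remark that ``taking $s=0$ shows each $G_t(z)$ lies in $\HH$'' inherits the same gap.

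The point is that the $\ell_0^\ty$ property of $G_t(z)-G_s(z)$ is not a pointwise-in-$\tau$ property of $Qz(\tau)$: it holds only after integration in time, thanks to the oscillating factor $e^{i\tau\la_{j,V}}$ that your sup-inside-the-integral step discards. This is why the paper keeps the time integral throughout: after the integrations by parts in $x$, the non-decaying part of the weighted coefficient is a bounded function of $\tau$ times bounded constants in $j$, and the Riemann--Lebesgue theorem applied to $\int_s^t e^{i(\la_{j_1,V_1}+\ldots+\la_{j_d,V_d})\tau}w(\tau)(\cdots)\,\dd\tau$, together with (\ref{E:app3}) and $\la_{j_1,V_1}+\ldots+\la_{j_d,V_d}\ri+\ty$, produces the required decay. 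Apart from this, your scheme is close to the paper's and essentially sound: extracting one factor $\la_{j_k,V_k}$ per coordinate without boundary terms (the paper extracts two, with boundary terms carrying $\p_{x_k}e_{j_k,V_k}\sim j_k$), then gaining the odd factor $j_1\cdots j_d$ by one further integration by parts controlled via (\ref{E:app2})--(\ref{E:app3}), does give the norm inequality in (\ref{E:S12}); and your derivation of the convergence of (\ref{E:S1}) from (\ref{E:gnar}), the choice $B>C+1$ and $w\in\GG$ coincides with the paper's. What is missing is exactly the Riemann--Lebesgue step in the time variable.
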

\begin{proof}

 Using    (\ref{E:gnar}) with $v=0$,   the definition of $\GG$, and choosing $\de>0$ sufficiently small, we see that   
 $$
 \int_0^{+\ty} \|z(\tau)\|_{H^l} |w(\tau)|\dd \tau<+\ty.
 $$Combining this with (\ref{E:S12}), we prove the convergence of the integral in (\ref{E:S1}).
 Let us prove (\ref{E:S12}).  To simplify the notation, let us suppose that $d = 2$; the proof of the general case is similar.
Let $V (x_1, x_2) = V_1(x_1)+ V_2(x_2)$.
Integration by parts   gives  
\begin{align*}
\lag Q z(s),   e_{j_1,V_1} e_{j_2,V_2}    \rag=&\frac{1}{\la_{j_1,V_1}  } \lag (-\frac{\p^2}{\p x^2_1}+V_1)(Q z),  e_{j_1,V_1} e_{j_2,V_2}   \rag      \nonumber\\=&\frac{1}{\la_{j_1,V_1}^2 } \lag (-\frac{\p^2}{\p x^2_1}+V_1)(Q z),  (-\frac{\p^2}{\p x^2_1}+V_1)e_{j_1,V_1} e_{j_2,V_2}     \rag \nonumber \\ =&\frac{1}{\la_{j_1,V_1 }  ^2 }  \int_0^1    \frac{\p^2}{\p x^2_1} (Q z)    e_{j_2,V_2}  \dd x_2\frac{\p}{\p x_1} e_{j_1,V_1}   \big|_{x_1=0}^{x_1=1}\nonumber\\&+ \frac{1}{\la_{j_1,V_1}^2 }\Big(\lag V_1(-\frac{\p^2}{\p x^2_1}+V_1)(Q z),  e_{j_1,V_1} e_{j_2,V_2}     \rag\nonumber\\&+\lag \frac{\p}{\p x_1}(-\frac{\p^2}{\p x^2_1}+V_1)(Q z),    \frac{\p}{\p x_1} e_{j_1,V_1} e_{j_2,V_2}   \rag\Big) \nonumber \\ =&:I_{j}+J_j.
\end{align*} Let us estimate $I_j$. Since  
$ \frac{\p^2}{\p x_1^2}  (Q z(s))=0 $ for all $x_1\in [0, 1]$ and for $x_2=0$ and $x_2 =1$, integration by parts in $x_2$ implies
\begin{align}
I_j& =\frac{1}{\la_{j_1,V_1 }  ^2\la_{j_2,V_2 }   }  \int_0^1  (-\frac{\p^2}{\p x^2_2}+V_2) \Big( \frac{\p^2}{\p x^2_1} (Q z)\Big)  e_{j_2,V_2}    \dd x_2  \frac{\p}{\p x_1}   e_{j_1,V_1}\big|_{x_1=0}^{x_1=1}\nonumber\\&=\frac{1}{\la_{j_1,V_1 }  ^2\la_{j_2,V_2 } ^2  }  \int_0^1  (-\frac{\p^2}{\p x^2_2}+V_2) \Big( \frac{\p^2}{\p x^2_1} (Q z)\Big)  (-\frac{\p^2}{\p x^2_2}+V_2) e_{j_2,V_2}    \dd x_2  \frac{\p}{\p x_1}   e_{j_1,V_1}\big|_{x_1=0}^{x_1=1}\nonumber\\&=\frac{1}{\la_{j_1,V_1 }  ^2\la_{j_2,V_2 } ^2  }   (-\frac{\p^2}{\p x^2_2}+V_2) \Big( \frac{\p^2}{\p x^2_1} (Q z)\Big)  \frac{\p}{\p x_2} e_{j_2,V_2}       \frac{\p}{\p x_1}   e_{j_1,V_1}\big|_{x_2=0}^{x_2=1}\big|_{x_1=0}^{x_1=1}\nonumber\\&\quad + \frac{1}{\la_{j_1,V_1 }  ^2\la_{j_2,V_2 } ^2  }  \int_0^1 V_2 (-\frac{\p^2}{\p x^2_2}+V_2) \Big( \frac{\p^2}{\p x^2_1} (Q z)\Big)    e_{j_2,V_2}    \dd x_2  \frac{\p}{\p x_1}   e_{j_1,V_1}\big|_{x_1=0}^{x_1=1}\nonumber\\&\quad + \frac{1}{\la_{j_1,V_1 }  ^2\la_{j_2,V_2 } ^2  }  \int_0^1 \frac{\p}{\p x_2}(-\frac{\p^2}{\p x^2_2}+V_2) \Big( \frac{\p^2}{\p x^2_1} (Q z)\Big)    \frac{\p}{\p x_2}  e_{j_2,V_2}    \dd x_2  \frac{\p}{\p x_1} e_{j_1,V_1}\big|_{x_1=0}^{x_1=1}\nonumber\\ &=:I_{j,1}+I_{j,2}+I_{j,3}.\end{align}
Let us consider the term $I_{j,1}$:
\begin{align*} 
&I_{j,1}=\Big(\frac{2j_1j_2\pi^2}{\la_{j_1,V_1 }  ^2\la_{j_2,V_2 } ^2  }   (-\frac{\p^2}{\p x^2_2}+V_2) \Big( \frac{\p^2}{\p x^2_1} (Q z)\Big)   \cos(j_1\pi x_1) \cos(j_2\pi x_2)      \nonumber\\& \!+\!\frac{1}{\la_{j_1,V_1 }  ^2\la_{j_2,V_2 } ^2  }  \! (-\frac{\p^2}{\p x^2_2}\!+\!V_2) \Big( \frac{\p^2}{\p x^2_1} \!(Q z)\Big)\!  \frac{\p^2}{\p x_1\p x_2}(e_{j_1,V_1}\! e_{j_2,V_2}\!-\!e_{j_1,0} e_{j_2,0}) \!\Big)\big|_{x_2=0}^{x_2=1}\big|_{x_1=0}^{x_1=1}.
\end{align*}
Using (\ref{E:app1}), (\ref{E:app3}) and the Sobolev embedding $H^s\hookrightarrow L^\ty, s>\frac{d}{2}$, we get
\begin{align*}
 \sup_{j_1,j_2\ge1}\Big|j_1^3j_2^3\int_s^te^{i(\la_{j_1,V_1 }+\la_{j_2,V_2 })\tau} w(\tau)I_{j,1}  \dd \tau \Big|\le C \int_s^{t}  \|  z(\tau) \|_{H^l}  |w(\tau) | \dd \tau.     
\end{align*}The Riemann--Lebesgue theorem and (\ref{E:app3}) imply  that
$$
j_1^3j_2^3\int_s^te^{i(\la_{j_1,V_1 }+\la_{j_2,V_2 })\tau} w(\tau)I_{j,1}  \dd \tau\ri0 \quad \text{as $j_1+j_2\ri+\ty.$}
$$
Thus
$$
j_1^3j_2^3\int_s^te^{i(\la_{j_1,V_1 }+\la_{j_2,V_2 })\tau} w(\tau)I_{j,1}  \dd \tau\in \ell_0^\ty.
$$The terms $I_{j,2},I_{j,3}$ and $J_{j}$ are treated exactly in the same way. We omit the details. 
Thus we get that
$$\|G_t(z)-G_s(z)\|_\HH=\|\int_s^tS(-\tau) [w(\tau)Qz(\tau)]\dd \tau\|_\HH
\le C \int_s^{t}  \|  z(\tau) \|_{H^l}  |w(\tau) | \dd \tau.
$$

\end{proof}

Let $T_n\ri+\ty$ be a sequence such that $e^{-i\la_{V,j}T_n}\ri 1$  as $n\ri\ty$ for any $j\ge1$ (e.g., see Lemma 2.1 in \cite{NN1}).
Then 
\begin{align}\label{E:llav}
 S(T_n)    z  \ri   z   \text{ as $n\ri+\ty$ in $ \HH$ for any $z \in \HH$ and $t\ge0$.} \end{align}Indeed, 
since 
\begin{equation}\label{E:azh}S(t)   z   = \sum_{j=1}^{+\ty}e^{-i\la_{j,V}t} \lag   z,e_{j,V}\rag e_{j,V},\end{equation}  
we have
 \begin{align*}\|S(T_n)    z   -  z \|_\HH\le &\sup_{\la_{j_1,\ldots,j_d,V}\le N}   (j_1^3\cdot\ldots\cdot j_d^3)|e^{-i\la_{j_1,\ldots,j_d,V}T_n}-1 ||  \lag   z,e_{j_1,\ldots,j_d,V}\rag| \nonumber\\&+2 \sup_{\la_{j_1,\ldots,j_d,V}> N}   (j_1^3\cdot\ldots\cdot j_d^3) |\lag   z,e_{j_1,\ldots,j_d,V}\rag|  \le \frac{\e}{2}+ \frac{\e}{2} =\e \nonumber
\end{align*} for sufficiently large integers $N,n\ge1$.  

Let us take $t=T_n$ in (\ref{E:DU}) and pass to the limit $n\ri\ty$.  Using Proposition~\ref{L:lav}, the embedding $H^l_{(V)} \hookrightarrow \HH$ and (\ref{E:llav}), we obtain the following result. 
\begin{lemma}Let us take  any     $l\ge {4d}$ and 
     $z_0\in H^l_{(V)}$. There is a constant $\de>0$ such that    for 	any $u\in B_{ C^m_0}(0,\de)\cap \GG $ the following limit exists in $\HH$
\begin{align}\label{E:ldl}
\lim_{n\ri+\ty}\UU_{T_n} (z_0,u)=:\UU_{\ty} (z_0,u).
\end{align}
\end{lemma}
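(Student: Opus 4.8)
The plan is to evaluate the Duhamel formula (\ref{E:DU}) at $t=T_n$ and factor out the free evolution. Since $S(T_n-s)=S(T_n)S(-s)$, writing $z(t):=\UU_t(z_0,u)$, I would rewrite the solution as
\[
\UU_{T_n}(z_0,u)=S(T_n)z_0-iS(T_n)\int_0^{T_n}S(-s)[u(s)Qz(s)]\,\dd s=S(T_n)\big(z_0-iG_{T_n}(z)\big),
\]
where $G_t$ is the operator of Proposition~\ref{L:lav} taken with weight $w=u$. This is legitimate precisely because the hypothesis $u\in B_{C^m_0}(0,\de)\cap\GG$ lets $u$ play simultaneously the role of the control and of the weight $w\in\GG$, with $\de$ chosen as the constant produced by Proposition~\ref{L:lav}.

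Two preliminary facts would then be recorded. First, $S(t)$ is an isometry of $\HH$: by (\ref{E:azh}) it acts in the eigenbasis as multiplication by the unimodular scalars $e^{-i\la_{j,V}t}$, so $\|S(t)w\|_\HH=\|w\|_\HH$ for every $w\in\HH$. Second, Proposition~\ref{L:lav} guarantees that the limit $G_\ty(z)$ in (\ref{E:S1}) exists in $\HH$, whence $G_{T_n}(z)\to G_\ty(z)$ in $\HH$; and the embedding $H^l_{(V)}\hookrightarrow\HH$ gives $z_0\in\HH$, so the candidate limit $z_0-iG_\ty(z)$ lies in $\HH$.

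The heart of the argument is the splitting
\[
\UU_{T_n}(z_0,u)-\big(z_0-iG_\ty(z)\big)=\Big(S(T_n)\big(z_0-iG_\ty(z)\big)-\big(z_0-iG_\ty(z)\big)\Big)-iS(T_n)\big(G_{T_n}(z)-G_\ty(z)\big).
\]
The first bracket tends to $0$ in $\HH$ by (\ref{E:llav}), applied to the fixed element $z_0-iG_\ty(z)\in\HH$. The second term is handled by the isometry property, which gives $\|S(T_n)(G_{T_n}(z)-G_\ty(z))\|_\HH=\|G_{T_n}(z)-G_\ty(z)\|_\HH\to0$. Hence $\UU_{T_n}(z_0,u)\to z_0-iG_\ty(z)$ in $\HH$, which establishes (\ref{E:ldl}) with $\UU_\ty(z_0,u)=z_0-iG_\ty(z)$.

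The point I expect to require care is the order of the two limits. Since $S(T_n)$ tends to the identity only strongly on $\HH$, and not in operator norm, one cannot pass to the limit inside $S(T_n)\big(z_0-iG_{T_n}(z)\big)$ directly. The decomposition above is exactly the device that isolates a frozen part, to which (\ref{E:llav}) applies, from a moving part $G_{T_n}(z)-G_\ty(z)$ whose $\HH$-norm is preserved by $S(T_n)$ and already tends to $0$ by Proposition~\ref{L:lav}. Everything else is routine.
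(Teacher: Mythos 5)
Your proof is correct and takes essentially the same route as the paper: the paper's entire proof is the remark preceding the lemma, which says to set $t=T_n$ in the Duhamel formula (\ref{E:DU}) and pass to the limit using Proposition \ref{L:lav}, the embedding $H^l_{(V)}\hookrightarrow\HH$, and the strong convergence (\ref{E:llav}) --- exactly the three ingredients you invoke. Your write-up simply makes explicit what the paper leaves implicit, namely the factorization $\UU_{T_n}(z_0,u)=S(T_n)\bigl(z_0-iG_{T_n}(z)\bigr)$, the isometry of $S(t)$ on $\HH$, and the splitting into a frozen part handled by (\ref{E:llav}) and a moving part handled by Proposition \ref{L:lav}.
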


 \subsection{Exact controllability in infinite time
}\label{S:APROX2} 
Let $l\ge4d$ be the integer in Proposition \ref{L:lav}.  Take any integer
 $s\ge l$ and let
$$
H^s_0(\R_+,\R):=\{u\in H^s (\R_+,\R): u^{(k)}(0)=0, k=0,  \ldots, s-1\}.
$$
  The set of admissible controls is the Banach space
\begin{align}\label{E:tty}
\FF:=   \GG     \cap H^s_0(\R_+,\R)
\end{align}
endowed with the norm
$\|u\|_\FF:= \|u\|_{\GG}+ \|u\|_{H^s}$. 
 Equality (\ref{E:barev}) implies that
it suffices to consider the controllability properties of
(\ref{E:hav1}), (\ref{E:ep1}) on the unit sphere $S$ in $L^2$.

 We prove the controllability of (\ref{E:hav1}),  (\ref{E:ep1}) under   below
condition. 

\begin{condition}\label{C:p1}Suppose that  
the   functions  $V,Q \in C^\ty(\overline{D},\R)$ are such that
\begin{enumerate}
\item [(i)] $\inf_{p_1, j_1,\ldots,p_d,j_d\ge1}|{{(p_1j_1 \cdot\ldots\cdot p_dj_d)^3}}   Q_{pj}|\!>\!0,   \! Q_{pj}\!:=\!  \langle Qe_{p_1,\ldots,p_d,V },e_{j_1,\ldots,j_d,V }\rangle $,
   \item [(ii)] $\la_{i,V }-\la_{j,V }\neq \la_{p,V }-\la_{q ,V}$ for
all $i,j,p,q\ge1$ such that
  $\{i,j\}\neq\{p,q\}$ and $i\neq j$.

\end{enumerate}
\end{condition}
See \cite{NN1} and \cite{PISI, VaNe,MS10}  for the proof of genericity of (i) and (ii), respectively. Let us set 
\begin{align}\label{E:etar}
\EE :=\textup{span}\{e_{j,V}\}.
\end{align}

Below theorem  is  the main result  of   this
paper.
\begin{theorem}\label{T:AnvL}
Under Condition \ref{C:p1}, for any $\tilde z\in S\cap \EE  $
 there is  
$\sigma>0$ such that problem (\ref{E:hav1}), (\ref{E:ep1}) is
exactly controllable in infinite time in    $ S\cap
B_{\HH}(\tilde z,\sigma)$, i.e., for any $z_1\in S\cap
B_{\HH}(\tilde z,\sigma)$ there is a control $u\in \FF$ such that limit (\ref{E:ldl}) exists in $\HH$ and  $z_1= \UU_\ty(\tilde z,u).$
\end{theorem}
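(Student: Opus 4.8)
The plan is to deduce Theorem \ref{T:AnvL} from the inverse mapping theorem applied to the end-point map $\Phi(u):=\UU_\ty(\tilde z,u)$. By (\ref{E:llav}) we have $\Phi(0)=\lim_{n}S(T_n)\tilde z=\tilde z$ (recall $\tilde z\in\EE\subset\bigcap_l H^l_{(V)}$), and since $\UU_t$ preserves the $L^2$-norm by (\ref{E:barev}), $\Phi$ takes values in the unit sphere $S\subset\HH$. First I would verify that, for a small ball of controls, $\Phi$ is a $C^1$ map from a neighbourhood of $0$ in $\FF$ into $\HH$; this is exactly where Lemma \ref{L:oe}, Proposition \ref{L:lav} and the lemma establishing (\ref{E:ldl}) are used, the whole point being that $\FF=\GG\cap H^s_0(\R_+,\R)$ is engineered so that the infinite-time limit exists and depends smoothly on $u$ in the topology of $\HH$ (note $\FF\hookrightarrow C^m_0\cap\GG$).

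Next I would compute the differential $L:=d\Phi(0)$. Differentiating the Duhamel formula (\ref{E:DU}) at $u=0$ along the free trajectory $S(s)\tilde z$ and passing to the limit $T_n\ri\ty$ as in (\ref{E:ldl}), one obtains the resolving operator of the linearised system (\ref{E:havZ1})--(\ref{E:spZ1}),
\begin{equation*}
Lv=-i\int_0^{+\ty}S(-s)\big[v(s)\,Q\,S(s)\tilde z\big]\,\dd s ,
\end{equation*}
which, writing $\tilde z=\sum_{k}\alpha_k e_{k,V}$ (a finite sum since $\tilde z\in\EE$), reads in the eigenbasis
\begin{equation*}
\lag Lv,e_{m,V}\rag=-i\sum_{k}\alpha_k Q_{km}\int_0^{+\ty}e^{i\om_{mk}s}v(s)\,\dd s,\qquad \om_{mk}=\la_{m,V}-\la_{k,V}.
\end{equation*}

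The heart of the proof, and the step I expect to be the main obstacle, is the controllability of this linearised map: I must show that $L$ maps $\FF$ onto the tangent space $T_{\tilde z}(S\cap\HH)=\{\zeta\in\HH:\Re\lag\zeta,\tilde z\rag=0\}$ and admits a bounded right inverse. This amounts to solving, with $v\in\FF$, the infinite-time moment problem of prescribing the coefficients $\int_0^{+\ty}e^{i\om_{mk}s}v(s)\,\dd s$, and both parts of Condition \ref{C:p1} enter. Fixing one index $k_0$ with $\alpha_{k_0}\neq0$, part (ii) guarantees that the frequencies $\{\om_{mk_0}\}_{m\ge1}$ are pairwise distinct and, apart from finitely many coincidences with the remaining $\om_{mk}$, $k\neq k_0$ (these occur only for the finitely many $m$ in the index set of $\tilde z$ and are absorbed into a finite-dimensional correction), separated from them; this reduces the task to the single-frequency moment problem which, on $[0,+\ty)$, is solvable through the independence of the exponentials (Lemma 3.10 of \cite{VN}) together with Theorem 2.6 of \cite{NN1}. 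Part (i), the lower bound $|Q_{k_0 m}|\gtrsim (m_1\cdots m_d)^{-3}$, matches exactly the weight $m_1^3\cdots m_d^3$ defining $\HH$, so that an $\ell^\ty_0$-decaying target corresponds to $\ell^\ty_0$ moment data and the constructed right inverse is bounded from $T_{\tilde z}(S\cap\HH)$ into $\FF$; the phase/norm constraint carried by the sphere is encoded by the real-first-coordinate spaces such as $\ell^\ty_{01}$.

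Finally, with $\Phi\in C^1$, $\Phi(0)=\tilde z$, and $L=d\Phi(0)$ surjective onto $T_{\tilde z}(S\cap\HH)$ with a bounded right inverse, I would invoke the surjective version of the inverse mapping theorem for $C^1$ maps between Banach spaces, together with the fact that $\Phi$ automatically lands on the unit sphere $S$ (so the normal, radial direction needs no control: locally $S\cap\HH$ is a graph over $T_{\tilde z}(S\cap\HH)$). This yields $\sigma>0$ such that $\Phi$ covers the whole $\HH$-neighbourhood $S\cap B_{\HH}(\tilde z,\sigma)$, i.e. for every $z_1\in S\cap B_{\HH}(\tilde z,\sigma)$ there is $u\in\FF$ with $z_1=\UU_\ty(\tilde z,u)$, which is the assertion of the theorem.
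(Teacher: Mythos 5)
Your strategy---infinite-time end-point map, $C^1$ regularity via Proposition \ref{L:lav} and Lemma \ref{L:oe}, linearization as a moment problem, inverse mapping theorem on the sphere via projection---is exactly the paper's strategy for \emph{most} points $\tilde z$, but it has a genuine gap: your claim that $L=\dd\Phi(0)$ is surjective onto $T_{\tilde z}\cap\HH$ for \emph{every} $\tilde z\in S\cap\EE$ is false. The paper's Theorem \ref{T:Lin} establishes the bounded right inverse only for $\tilde z\in S\cap\EE\setminus\EE_0$, and explicitly asserts \emph{non}-invertibility on the exceptional set
$\EE_0$ of points $\tilde z=c_pe_{p,V}+c_qe_{q,V}$ with $|c_p|^2\lag Qe_{p,V},e_{p,V}\rag=|c_q|^2\lag Qe_{q,V},e_{q,V}\rag$ (a remark going back to Beauchard--Coron \cite{BeCo}). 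The obstruction is elementary: since $u$ is real-valued, $\check u(0)\in\R$ and $\check u(\om_{qp})=\overline{\check u(\om_{pq})}$, so formula (\ref{E:gcza}) gives, for such $\tilde z$,
\begin{equation*}
\Im\bigl(\bar c_p\lag R_\ty(0,u),e_{p,V}\rag\bigr)-\Im\bigl(\bar c_q\lag R_\ty(0,u),e_{q,V}\rag\bigr)
=-\bigl(|c_p|^2Q_{pp}-|c_q|^2Q_{qq}\bigr)\check u(0)=0
\end{equation*}
for all $u\in\FF$, while this linear functional does not vanish on $T_{\tilde z}\cap\HH$ (test with $z=ic_pe_{p,V}$). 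Hence the range of the linearized map lies in a closed codimension-one subspace, the inverse mapping theorem cannot be invoked at these points, and Condition \ref{C:p1} does not exclude them: $\EE_0$ is nonempty under that condition. Since the theorem claims controllability for \emph{any} $\tilde z\in S\cap\EE$, your proof covers only part of the statement. The paper closes this case by a separate argument, repeating Beauchard--Coron: the linearized system is controllable up to codimension one, hence so is the nonlinear system, and the missing direction is recovered by the intermediate value theorem.

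A secondary imprecision, worth flagging even for $\tilde z\notin\EE_0$: your reduction to a ``single-frequency moment problem'' for one fixed index $k_0$ glosses over the constraint forced by the real-valuedness of $u$, namely $\check u(\om_{km})=\overline{\check u(\om_{mk})}$, so the moment data $d_{mk}$ for the finitely many $k\le N$ must be prescribed \emph{simultaneously} subject to the Hermitian symmetry $d_{mk}=\overline d_{km}$ and $d_{mm}=d_{11}$; in the paper's proof of Theorem \ref{T:Lin} this is precisely the role of the correction constants $C_{mk}$, and it is where the tangency condition $\Re\lag y,\tilde z\rag=0$ and the structure of $\EE_0$ enter. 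Only after that reduction does Proposition \ref{P:P1} (solvability of the moment problem at distinct frequencies $\om_m$, with a bounded right inverse $\ell^\ty_{01}\to\FF$) apply.
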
 
See Section \ref{S:1} for the proof. Since the space $H^{3d}_{(V)}$ is  continuously embedded into $\HH$, we obtain
\begin{theorem}\label{T:AnvL}
Under Condition \ref{C:p1}, for any $\tilde z\in S\cap \EE  $
 there is  
$\sigma>0$ such that   for any $z_1\in S\cap
B_{H^{3d}_{(V)}}(\tilde z,\sigma)$ there is a control $u\in \FF$ such that limit (\ref{E:ldl}) exists in $\HH$ and  $z_1= \UU_\ty(\tilde z,u).$

\end{theorem}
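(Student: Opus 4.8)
The plan is to obtain this statement as an immediate corollary of the preceding theorem (exact controllability inside $\HH$-balls), the sole extra ingredient being the continuous embedding $H^{3d}_{(V)}\hookrightarrow\HH$ flagged just before the statement. Granting that embedding with a constant $c>0$ (so that $\|z\|_\HH\le c\,\|z\|_{3d,V}$ for all $z\in H^{3d}_{(V)}$), the argument reduces to a one-line radius comparison: let $\sigma_0>0$ be the radius delivered by the $\HH$-version of the theorem for the given $\tilde z\in S\cap\EE$, and put $\sigma:=\sigma_0/c$. The embedding then forces $B_{H^{3d}_{(V)}}(\tilde z,\sigma)\subset B_\HH(\tilde z,\sigma_0)$, so any $z_1\in S\cap B_{H^{3d}_{(V)}}(\tilde z,\sigma)$ already lies in $S\cap B_\HH(\tilde z,\sigma_0)$; the control $u\in\FF$ produced by the previous theorem then satisfies $z_1=\UU_\ty(\tilde z,u)$, with the limit (\ref{E:ldl}) existing in $\HH$.

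The only point requiring verification is the embedding itself, which I would establish by comparing the weights defining the two norms. Using the product structure (\ref{E:j1}) together with the one-dimensional asymptotics (\ref{E:app1}), I would first note that $\la_{j_1,\ldots,j_d,V}\ge c_0(j_1^2+\ldots+j_d^2)$ for a uniform $c_0>0$ (shifting $V$ by a constant beforehand if needed so that $-\Delta+V>0$ and the weights $\la_{j,V}^{3d/2}$ are well defined). An application of the arithmetic--geometric mean inequality $j_1^2+\ldots+j_d^2\ge d\,(j_1^2\cdots j_d^2)^{1/d}$ then gives $\la_{j_1,\ldots,j_d,V}\ge c_0 d\,(j_1^2\cdots j_d^2)^{1/d}$, whence, raising to the power $3d/2$,
$$
j_1^3\cdots j_d^3=(j_1^2\cdots j_d^2)^{3/2}\le C\,\la_{j_1,\ldots,j_d,V}^{3d/2}\qquad\text{for all }j_1,\ldots,j_d\ge1.
$$
Combined with the trivial bound $\|\cdot\|_{\ell^\ty}\le\|\cdot\|_{\ell^2}$, this yields $\|z\|_\HH\le C\,\|z\|_{3d,V}$, which is the desired embedding; the membership $H^{3d}_{(V)}\subset\HH$ as sets is automatic, since $|(j_1^3\cdots j_d^3)\lag z,e_{j_1,\ldots,j_d,V}\rag|\le C\,|\la_{j_1,\ldots,j_d,V}^{3d/2}\lag z,e_{j_1,\ldots,j_d,V}\rag|$ and every $\ell^2$ sequence lies in $\ell^\ty_0$.

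I do not expect any genuine obstacle here: all the controllability content is inherited verbatim from the previous theorem, and the embedding is merely an elementary AM--GM comparison of the two weight sequences. The single care point is the positivity normalization of $-\Delta+V$ needed to make sense of $\la_{j,V}^{3d/2}$, which is harmless, as adding a constant to the potential changes neither the eigenfunctions nor the controllability of (\ref{E:hav1}), (\ref{E:ep1}).
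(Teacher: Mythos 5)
Your proposal is correct and follows exactly the paper's route: the paper deduces this statement from the preceding $\HH$-ball controllability theorem in one line, citing the continuous embedding $H^{3d}_{(V)}\hookrightarrow\HH$, which is precisely your radius-comparison argument. Your AM--GM verification of the weight inequality $j_1^3\cdots j_d^3\le C\,\la_{j_1,\ldots,j_d,V}^{3d/2}$ (together with the observation that $\ell^2$ sequences lie in $\ell^\ty_0$) correctly supplies the embedding detail that the paper leaves implicit.
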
 
\begin{remark}       As in the case $d=1$ (see Theorems 3.7 and 3.8 in \cite{NN1}) here also one can prove controllability in higher Sobolev spaces with more regular controls, and a global controllability property  using a compactness argument. 
\end{remark}

 \section{Proof of Theorem \ref{T:AnvL}}\label{S:MR}

\subsection{Controllability of   linearized
system}\label{S:Gcayin}

In this section,  we   study  the controllability of the   linearization of   (\ref{E:hav1}), (\ref{E:ep1}) around the trajectory $\UU_t(\tilde z ,0), \tilde z\in S\cap \EE  
$:
\begin{align}
i\dot z &= -\Delta z+V(x)z  + u(t) Q(x)\UU_t(\tilde z,0),\,\,\,\,
\label{E:hav2}\\
z\arrowvert_{\partial D}&=0,\label{E:ep2}\\
 z(0,x)&=z_0.\label{E:sp2}
\end{align}  
   The controllability in infinite time of this system  is proved in \cite{NN1}, Section 2. For the proof of Theorem \ref{T:AnvL} we need to show   controllability of (\ref{E:hav2})-(\ref{E:sp2}) in $\HH$ which is   larger than the space considered in \cite{NN1}.  Hence a generalization  of the arguments of \cite{NN1} is needed.

   Let $S$ be the unit sphere in $L^2$. For  $y\in S$, let $T_y$ be the   tangent space to $S$ at $y\in S$:
$$
T_{y}=\{z\in L^2: \Re\lag z, y\rag=0\}.
$$
 By  Lemma \ref{L:LD}, for any $z_0\in  H_{(V)}^2$ and $u\in L_{loc}^1(\R_+,\R)$,
  problem
(\ref{E:hav2})-(\ref{E:sp2}) has a unique solution   $ z\in
C(\R_+, H_{(V)}^2) $.
 Let 
\begin{align*}
R_t(\cdot, \cdot):
   H_{(V)}^2\times L^1([0,t],\R)&\rightarrow
 H_{(V)}^2,\\ (z_0,u)&\rightarrow z(t)
 \end{align*} be   the resolving operator. 
Then $R_t(z_0,u)\in T_{\UU_t(\tilde z,0)}$ for any $z_0\in T_{\tilde z}\cap H_{(V)}^2$ and
$t\ge0$. Indeed, 
  \begin{align*}
  \frac{\dd}{\dd t}\Re \lag R_t , \UU_t \rag&= \Re \lag \dot R_t , \UU_t \rag+ \Re \lag R_t , \dot \UU_t \rag\\&= \Re \lag i(\Delta-V) R_t  -i u(t) Q(x)\UU_t 
 , \UU_t \rag + \Re \lag R_t , i(\Delta-V) \UU_t \rag \nonumber\\
 &=\Re \lag i(\Delta-V) R_t   
 , \UU_t \rag + \Re \lag R_t , i(\Delta-V) \UU_t \rag=0.
 \end{align*} Since $ \Re \lag R_0 , \UU_0 \rag=  \Re \lag z_0 , \tilde z \rag=0$, we get $R_t(z_0,u)\in T_{\UU_t(\tilde z,0)}$.

 As (\ref{E:hav2})-(\ref{E:sp2}) is a linear control problem, the controllability of system with $z_0=0$ is equivalent to that   with any $z_0\in T_{\tilde z}$. Henceforth, we   take $z_0=0$ in   (\ref{E:sp2}).
Let us rewrite this problem in the Duhamel form
\begin{equation}\label{E:lpo}z(t)= -i\int_0^t S(t-s) u(s) Q(x)\UU_s(\tilde z,0)\dd s.
\end{equation}  
 Let $T_n\ri\ty$ be the sequence defined in Section \ref{S:APROX1}. For any $u\in \FF $ the following limit exists in $\HH$   
\begin{equation}\label{E:eez}R_{\ty}(0,u):=\lim_{n\rightarrow +\ty} z(T_n)=\lim_{n\rightarrow
+\ty}
R_{T_n}(0,u).
\end{equation} 
Using (\ref{E:azh}) and (\ref{E:lpo}), we obtain
\begin{equation}\label{E:gcz}
\lag z(t), e_{m,V}\rag=-i \sum_ {k=1} ^{+\ty} e^{-i\la_{m,V}t}\lag\tilde z,e_{k,V}\rag Q_{mk} 
\int^t_0 e^{i\om_{mk} s} u(s)\dd s,\,m\ge1,
\end{equation}where $\om _{mk}=\la_{m,V}-\la_{k,V}$ and  $Q_{mk}:= \langle
Qe_{m,V },e_{k,V }\rangle $.   Let us take  $t=T_n$
in (\ref{E:gcz}) and pass to the limit as $n \rightarrow +\ty$.   The choice of the sequence $T_n$ implies that
\begin{equation}\label{E:gcza}
\lag R_{\ty}(0,u), e_{m,V}\rag=-i \sum_ {k=1} ^{+\ty} \lag\tilde z,e_{k,V}\rag Q_{mk} 
\int^{+\ty}_0 e^{i\om_{mk} s} u(s)\dd s.
\end{equation}
 Moreover, $R_{\ty}(0,u) \in T_{\tilde z} $. Indeed, using (\ref{E:eez}) and  the convergence $  \UU_{T_n}(\tilde z ,0)\ri \tilde z $ in $ \HH$, we get 

$$
\Re\lag R_{\ty}(0,u),\tilde z\rag=\lim_{n\ri\ty} \Re\lag R_{T_n}(0,u),\UU_{T_n}(\tilde z,0)\rag=0.
$$ 

\begin{lemma}\label{L:gc2}
The mapping  $R_{\ty}(0,\cdot)$ is linear continuous   from $\FF$ to $
T_{\tilde z}\cap \HH$.\end{lemma}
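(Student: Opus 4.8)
The plan is to dispatch linearity immediately and then reduce the continuity estimate to Proposition~\ref{L:lav} via the group property of the free evolution $S$. Linearity of $R_\ty(0,\cdot)$ is clear: the Duhamel representation (\ref{E:lpo}) shows that $z(t)=R_t(0,u)$ depends linearly on $u$, and the limit (\ref{E:eez}) defining $R_\ty(0,u)$ preserves linearity. It was already verified above that $R_\ty(0,u)\in T_{\tilde z}$ and that the limit (\ref{E:eez}) lives in $\HH$, so the codomain $T_{\tilde z}\cap\HH$ is correct and the only remaining point is the bound $\|R_\ty(0,u)\|_\HH\le C\|u\|_\FF$.

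For continuity I would rewrite (\ref{E:lpo}) using $S(t-s)=S(t)S(-s)$ as
\begin{equation*}
z(t)=-iS(t)\int_0^tS(-s)\,u(s)Q\,\UU_s(\tilde z,0)\,\dd s=-iS(t)\,G_t(\UU_\cdot(\tilde z,0)),
\end{equation*}
where $G_t$ is the operator of Proposition~\ref{L:lav} taken with weight $w=u\in\GG$ and with the trajectory $\tau\mapsto\UU_\tau(\tilde z,0)$, i.e.\ the solution with control $0\in B_{C^m_0}(0,\de)$. Passing to the limit along $t=T_n$ and using that $S(T_n)$ is an isometry of $\HH$ (immediate from (\ref{E:azh}) since $|e^{-i\la_{j,V}T_n}|=1$), together with the convergence $G_{T_n}\to G_\ty$ in $\HH$ from Proposition~\ref{L:lav} and $S(T_n)G_\ty\to G_\ty$ from (\ref{E:llav}) applied to $G_\ty\in\HH$, I obtain
\begin{equation*}
R_\ty(0,u)=-i\,G_\ty(\UU_\cdot(\tilde z,0)).
\end{equation*}

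It then suffices to let $t\to+\ty$ in estimate (\ref{E:S12}) with $s=0$ (recall $G_0=0$), which gives
\begin{equation*}
\|R_\ty(0,u)\|_\HH\le C\int_0^{+\ty}\|\UU_\tau(\tilde z,0)\|_{H^l}\,|u(\tau)|\,\dd\tau.
\end{equation*}
Writing $\tilde z=\sum_{k\in K}c_ke_{k,V}$ with $K$ finite (as $\tilde z\in\EE$), the free evolution is $\UU_\tau(\tilde z,0)=S(\tau)\tilde z=\sum_{k\in K}c_ke^{-i\la_{k,V}\tau}e_{k,V}$, so by the triangle inequality $\sup_{\tau\ge0}\|\UU_\tau(\tilde z,0)\|_{H^l}\le\sum_{k\in K}|c_k|\,\|e_{k,V}\|_{H^l}=:C_{\tilde z}<+\ty$; and since $e^{B\tau}\ge1$ we have $\|u\|_{L^1}\le\|u\|_\GG\le\|u\|_\FF$. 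Combining these yields $\|R_\ty(0,u)\|_\HH\le C_{\tilde z}\,C\,\|u\|_\FF$, the desired continuity.

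All the analytic substance sits inside Proposition~\ref{L:lav}, so there is no serious obstacle; the point to handle carefully is the interchange of the limit $t=T_n\to+\ty$ with the two convergences above, which is why one must know that $S$ acts isometrically on $\HH$ and that $G_\ty$ already belongs to $\HH$ before invoking (\ref{E:llav}). A more computational alternative would start directly from (\ref{E:gcza}): finiteness of the $k$-sum (again because $\tilde z\in\EE$), the bound $|\int_0^{+\ty}e^{i\om_{mk}s}u(s)\,\dd s|\le\|u\|_{L^1}$, and the rapid decay of the Fourier coefficients of the smooth functions $Qe_{k,V}$ (which makes $(m_1^3\cdots m_d^3)|Q_{mk}|$ bounded and tending to $0$) would reprove both the $\HH$-bound and the membership in $\ell_0^\ty$; but this path needs the same boundary integrations by parts already carried out in Proposition~\ref{L:lav}, so the reduction above is preferable.
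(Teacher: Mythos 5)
Your proof is correct, but it follows a genuinely different route from the paper's. The paper works at the level of spectral coefficients: starting from the moment representation (\ref{E:gcza}), it separates the diagonal term $k=m$ from the off-diagonal ones and bounds the latter using $\|\tilde z\|_\VV$ together with the matrix-element estimate (\ref{E:reza}) (inequality (2.24) of \cite{NN1}), the factor $|\check u(\om_{mk})|\le\|u\|_{L^1}\le\|u\|_\FF$ absorbing the control. You instead factor the Duhamel integral as $R_t(0,u)=-iS(t)G_t(\UU_\cdot(\tilde z,0))$ and lean entirely on Proposition \ref{L:lav}, applied with weight $w=u\in\GG$ and control $0\in B_{C^m_0}(0,\de)$: the convergence $G_{T_n}\to G_\ty$ in $\HH$, the fact that $S(T_n)$ acts isometrically on $\HH$ (immediate from (\ref{E:azh})), and (\ref{E:llav}) identify the limit, and the bound follows from (\ref{E:S12}) plus $\sup_{\tau\ge0}\|S(\tau)\tilde z\|_{H^l}<+\ty$, valid since $\tilde z\in\EE$. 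Your reduction buys several things: it avoids importing (\ref{E:reza}) from \cite{NN1}; it simultaneously furnishes a proof of the existence of the limit (\ref{E:eez}), which the paper asserts without explicit argument; the resulting bound is manifestly linear in $\|u\|_\FF$ (the paper's final display $C\|\tilde z\|_\VV^2\|u\|_\FF^2$ has the wrong homogeneity, harmless by rescaling but sloppy); and membership of $R_\ty(0,u)$ in $\ell^\ty_0$, hence in $\HH$ and not merely boundedness of the weighted coefficients, comes for free since $G_\ty\in\HH$. What the paper's computation buys in exchange: the coefficient formula (\ref{E:gcza}) together with (\ref{E:reza}) is exactly the set-up reused immediately afterwards in the proof of Theorem \ref{T:Lin} to solve the moment problem, so the paper's proof doubles as preparation for the surjectivity argument, and it applies verbatim to any $\tilde z$ with $\|\tilde z\|_\VV<+\ty$ rather than only $\tilde z\in\EE$ (your trajectory bound also generalizes, via $\|S(\tau)\tilde z\|_{H^l}\le C\|\tilde z\|_{l,V}$, but as written it uses the finiteness of the expansion). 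Note finally that the ``computational alternative'' you sketch in your last paragraph is, in substance, the paper's actual proof.
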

\begin{proof}   

 By (2.24) in \cite{NN1},   there is a constant $C>0$ such that for any $m_j,k_j\ge 1$, $j=1,\ldots,d$ we have
\begin{align}\label{E:reza}
\Big|\frac{(m_1\cdot\ldots\cdot m_d)^3}{(k_1\cdot\ldots\cdot k_d)^3} \langle Qe_{k_1,\ldots,k_d,V },e_{m_1,\ldots,m_d,V }\rangle\Big|\le C.
\end{align}
Then (\ref{E:gcza}), (\ref{E:reza}) and the Schwarz inequality imply that
\begin{align*} 
&\|   R_{\ty}(0,u)\|_\HH =\sup_{m_1,\ldots,m_d\ge1} |  (m_1^3\cdot\ldots\cdot m_d^3) \lag R_{\ty}(0,u), e_{{m_1,\ldots, m_d,V}}   \rag| \\&\le C\!\!\!\sup_{m_1,\ldots,m_d\ge1}\!\! \Big|\! (m_1^3\cdot\ldots\cdot m_d^3) \lag \tilde z, e_{{m_1,\ldots, m_d,V}}   \rag\langle Qe_{m,V },e_{m,V }\!\rangle \!\!\int^{+\ty}_0\!\!\!\!  u(s)\dd s\Big| \!\\&\quad+ C \!\| \tilde z\|_\VV   \!\!\!\sup_{m,k\ge1, m\neq k } \!\! \Big|\!\frac{(m_1\cdot\ldots\cdot m_d)^3}{(k_1\cdot\ldots\cdot k_d)^3}\! \langle Qe_{k_1,\ldots,k_d,V },e_{m_1,\ldots,m_d,V }\!\rangle \!\!\int^{+\ty}_0\!\!\!\! e^{i\om_{mk} s} u(s)\dd s\Big| \\&\le C\| \tilde z\|_\VV ^2 \|u\|_\FF^2<+\ty,
\end{align*} where $\VV$ is defined by (\ref{E:ffaz}).

\end{proof}

Let  us introduce the set   \begin{align*}
\EE_0\!:=\!\{z\in S\!:\!\exists p,q\ge 1,p\neq q,& z=c_pe_{p,V}+c_qe_{q,V}, \nonumber\\& |c_p |^2 \lag Q e_{p,V},e_{p,V}\rag\!-\!| c_q |^2 \lag Q e_{q,V},e_{q,V}\rag = 0  \}.\end{align*}   
\begin{theorem}\label{T:Lin} Under Condition  \ref{C:p1}, for any $\tilde z\in S \cap \EE \setminus \EE_0$, the mapping
$R_{\ty}(0,\cdot):\FF\rightarrow
T_{\tilde z}\cap \HH$   admits a continuous
right inverse, where the space $T_{\tilde z}\cap \HH$ is endowed with the
norm of $\HH$. If $\tilde z\in S \cap   \EE_0$, then $R_{\ty}(0,\cdot)$ is not invertible.
\end{theorem}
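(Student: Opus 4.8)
The plan is to read (\ref{E:gcza}) as a moment problem for $u$ and to build a bounded linear right inverse by hand. Write $\tilde z=\sum_{k\in F}c_k e_{k,V}$ with $F$ finite (as $\tilde z\in\EE$), and for a target $z^*\in T_{\tilde z}\cap\HH$ put $z^*_m:=\lag z^*,e_{m,V}\rag$. With the shorthand $U(\om):=\int_0^{+\ty}e^{i\om s}u(s)\,\dd s$, equation (\ref{E:gcza}) becomes $z^*_m=-i\sum_{k\in F}c_k Q_{mk}\,U(\om_{mk})$, $m\ge1$. Since $u$ is real, $U(-\om)=\overline{U(\om)}$, and as $\om_{km}=-\om_{mk}$ the unknowns are conjugate-paired. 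The task splits into prescribing the numbers $U(\om_{mk})$ and then realizing them by one $u\in\FF$.

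First I would use Condition \ref{C:p1}(ii) to separate frequencies. For $m\notin F$ every $\om_{mk}$, $k\in F$, is nonzero and, by (ii), occurs in a single equation and never coincides with a conjugate frequency $\om_{k'm'}$ appearing in another equation; so I fix $k_0\in F$ with $c_{k_0}\neq0$, set $U(\om_{mk})=0$ for $k\in F\setminus\{k_0\}$, $U(\om_{mk_0})=iz^*_m/(c_{k_0}Q_{mk_0})$, and assign the conjugate value at the otherwise free frequency $\om_{k_0m}$. Condition \ref{C:p1}(i) gives $|Q_{mk_0}|\gtrsim(m_1\cdots m_d)^{-3}$, so these prescribed values are bounded by $C\|z^*\|_\HH$ by the definition of $\HH$. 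By (ii) the full set of prescription frequencies is a set of distinct reals accumulating only at $\pm\ty$, symmetric under $\om\mapsto-\om$, carrying conjugate data; I would then invoke the infinite-time moment machinery of \cite{NN1} (biorthogonality of the $e^{i\om_{mk}s}$ on $[0,+\ty)$), together with the ratio bound (\ref{E:reza}), to produce a real $u\in\FF$ with these moments and $\|u\|_\FF\le C\|z^*\|_\HH$, linearly in $z^*$.

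It remains to handle the resonant block $m\in F$, where $k=m$ contributes the single shared real $U(0)=\int_0^{+\ty}u$ and $k\in F\setminus\{m\}$ the internal frequencies $\om_{mk}$, $m,k\in F$, paired by reality. This is a finite linear system for $U(0)\in\R$ and $\{U(\om_{mk})\}_{m\neq k}$ with $U(\om_{km})=\overline{U(\om_{mk})}$. A real functional $z^*\mapsto\Re\sum_{m\in F}\overline{\mu_m}z^*_m$ annihilates its image iff $\overline{\mu_m}c_k=\mu_k\overline{c_m}$ for all $m,k\in F$ (from the internal terms) and $\Im\sum_{m\in F}\overline{\mu_m}c_mQ_{mm}=0$ (from $U(0)$). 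When $|F|\neq2$ one checks that $\mu_m=\lambda c_m$ with $\lambda\in\R$ is forced — directly if $|F|=1$, and, when $|F|\ge3$, because compatibility of the relation over triples $\{m,k,l\}\subset F$ makes $\mu_m/c_m$ a real constant — so the only obstruction is $\Re\lag z^*,\tilde z\rag=0$, i.e. membership in $T_{\tilde z}$; hence the block is onto and, combined with the previous step, yields the right inverse. When $F=\{p,q\}$, parametrizing $\mu_p=c_pw$, $\mu_q=c_q\overline w$ ($w\in\C$), the second condition reduces to $v\,(|c_q|^2Q_{qq}-|c_p|^2Q_{pp})=0$ with $v=\Im w$. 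Off $\EE_0$ this forces $v=0$ and again only $\Re\lag\cdot,\tilde z\rag$ obstructs; on $\EE_0$ it holds for every $v$, so the extra functional $z^*\mapsto\Im(\overline{c_p}z^*_p-\overline{c_q}z^*_q)$ also annihilates the image. As this functional does not vanish on all of $T_{\tilde z}\cap\HH$, the map fails to be surjective, giving non-invertibility on $S\cap\EE_0$.

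The hard part will be the second step: producing \emph{one} real $u\in\FF$ that simultaneously realizes infinitely many prescribed moments at the gap-free frequencies $\{\om_{mk}\}$ while respecting the weighted estimate $\|u\|_\FF\le C\|z^*\|_\HH$. This is precisely where the infinite horizon $T=+\ty$ is indispensable — the exponentials are independent and admit a biorthogonal family on $[0,+\ty)$ though not on any finite interval — and where (\ref{E:reza}) and Condition \ref{C:p1}(i) are needed to align the construction with the $(j_1\cdots j_d)^3$ weights of $\HH$. Adapting the one-dimensional moment estimates of \cite{NN1} to the $d$-dimensional frequencies and product weights is the technical core of the argument.
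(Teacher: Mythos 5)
Your overall architecture coincides with the paper's: both proofs read (\ref{E:gcza}) as a moment problem $\check u(\om_{mk})=d_{mk}$, use Condition \ref{C:p1}(i) and the definition of $\HH$ to check that the prescribed data are bounded and tend to zero, and then quote an infinite-time moment result to produce $u\in\FF$ linearly in the target. Your treatment of the finite resonant block is in fact more explicit than the paper's: where you solve the coupled system for $U(0)\in\R$ and the internal frequencies by a duality computation and recover $\EE_0$ as exactly the degenerate case, the paper instead writes the symmetrized data $d_{mk}=\bigl(i\lag y,e_{m,V}\rag\lag e_{k,V},\tilde z\rag-i\lag e_{k,V},y\rag\lag\tilde z,e_{m,V}\rag\bigr)/Q_{mk}+C_{mk}$ and defers the choice of the corrections $C_{mk}$, as well as the non-invertibility on $\EE_0$, to Theorem 2.6 of \cite{NN1} and the Beauchard--Coron remark. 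Your block analysis, and your observation that on $\EE_0$ the functional $z^*\mapsto\Im(\overline{c_p}z^*_p-\overline{c_q}z^*_q)$ annihilates the image while not vanishing on $T_{\tilde z}\cap\HH$, are correct.

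The genuine problem is the mechanism you invoke for the core step. There is no ``biorthogonal family for the exponentials $e^{i\om_{mk}s}$ on $[0,+\ty)$'' to appeal to: precisely because the multidimensional frequencies have no gap (and, by \cite{A74}, the exponentials are even linearly dependent on every finite interval), Ingham/Riesz-basis type constructions fail, and what Lemma 3.10 of \cite{VN} provides is only linear independence on $[0,+\ty)$ --- which implies neither minimality nor the existence of a biorthogonal family with the summability bounds you would need to assemble $u$ by superposition from $\ell^\ty_0$ data. The paper's substitute is Proposition \ref{P:P1}, proved by soft, non-constructive functional analysis: minimize $\|\{\check u(\om_m)\}-d\|_{\ell^\ty}$ over balls of the Hilbert space $\tilde\FF$ using weak compactness; obtain density of the attainable moment sequences by duality from the independence of the exponentials; upgrade density to surjectivity via the Baire lemma; and conclude because a bounded linear surjection from a Hilbert space onto a Banach space admits a bounded linear right inverse. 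Your prescribed data does lie in $\ell^\ty_{01}$ (bounded and decaying by Condition \ref{C:p1}(i) and $z^*\in\HH$, with the value at $\om=0$ real), and your prescription frequencies form an increasing sequence starting at $0$, so your argument is repaired simply by replacing the biorthogonality invocation with Proposition \ref{P:P1}; but as written, the step you yourself identify as the technical core rests on a tool that does not exist in this setting. (Incidentally, the ratio bound (\ref{E:reza}) is not what is needed there either: it enters in Lemma \ref{L:gc2} to prove continuity of $R_\ty(0,\cdot)$, while the right inverse only requires the lower bound of Condition \ref{C:p1}(i).)
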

\begin{remark}
The invertibility of the mapping $R_{T}(0,\cdot)$  with finite $T>0$ and $\tilde z=e_1$ is studied by Beauchard et al. \cite{BCKY}. They prove that for   space dimension $d\ge 3$ the  mapping  is not invertible.  By Beauchard \cite{BCH}, $R_{T}$ is invertible in the case $d=1$ and $\tilde z=e_1$. The case $d=2$ is open to our knowledge.\end{remark}
 
For any $u\in L^1(\R_+,\R)$,  denote by $\check u$ the inverse Fourier transform of the function obtained by extending $u$ as zero to $\R_-^*$:
\begin{equation}\label{E:eaz}
\check u(\om):= \int^{+\ty}_0 e^{i\om s} u(s)\dd s.
\end{equation}

\begin{proof}[Proof of Theorem \ref{T:Lin}]

  Let us take any $\tilde z\in S \cap \EE \setminus \EE_0$ and $y\in  T_{\tilde z}\cap \HH$. There is an integer $N\ge1$ such that $\lag \tilde z, e_{k,V}\rag=0$ for any $k\ge N+1$. Let us define \begin{equation*} d_{mk}:=\frac{i \lag y, e_{m,V}\rag \lag e_{k,V},\tilde z\rag -i\lag  e_{k,V},y\rag \lag \tilde z,e_{m,V}\rag}{ Q_{mk}  
}+C_{mk},
\end{equation*} for $k\le N$, where $C_{mk}\in\C$. Notice that
 \begin{align*}
\sup_{m,k\ge1 } \Big| \frac{  \lag y, e_{m,V}\rag \lag e_{k,V},\tilde z\rag  }{ Q_{mk}  
} \Big |  &\le   C \| y\|_{\HH} \|\tilde z\|_{\HH} <+\ty.
\end{align*}Repeating the arguments of the proof of  Theorem 2.6 in \cite{NN1}, one can show that  the constants $C_{mk}$ can be chosen  such that  

\begin{align*}
  \sup_{m,k\ge1}    |d_{mk}|  <+\ty, d_{mm}=d_{11},    &  \text{  $d_{mk}= {\overline d}_{km}$ for all $ 1\le m,k\le N,  $} \\&  \text{$d_{mk}\ri 0$ as $m\ri \ty$ for any fixed $k\ge1$},
\end{align*}
and  $y=R_{\ty}(0,u)$ holds for any solution $u\in \FF$ of system 
$$d_{mk} = \check u (\om_{mk}) \quad\text{for all $m\ge1$ and $k\in[1,N]$}. $$  It remains to use the following  proposition, which is proved in next subsection. 
\begin{proposition}\label{P:P1} If the strictly increasing sequence $\omega_m\in \R, m\ge1$ is such that $\om_1=0$ and $\om_m\ri+\ty$ as  $m\ri+\ty$, then
there is a linear continuous operator $A$ from $ \ell^\ty_{01}$ to $  \FF$ such that $\{ \check { A(d)} (\om_{m})\}=d$ for any $d\in  \ell^\ty_{01} $. 
\end{proposition}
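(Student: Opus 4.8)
The plan is to realize $A$ by an explicit interpolation formula adapted to the half-line, exploiting that the weight defining $\GG$ turns $\check u$ into an analytic function. First I would dispose of the normalization constraints. Reality of $u$ forces $\check u(-\om)=\overline{\check u(\om)}$, but since all nodes satisfy $\om_m\ge 0$ and only $\om_1=0$ lies on the symmetry axis, the sole compatibility requirement is that $d_1=\check u(0)=\int_0^{+\ty}u(s)\,\dd s$ be real, which is exactly the defining constraint of $\ell^\ty_{01}$. The conditions $u\in H^s$ and $u^{(k)}(0)=0$ for $k<s$, as well as $u\in\GG$, I would build in from the start by taking all the elementary building blocks of the construction to be smooth, flat to high order at $0$, and decaying like $e^{-B's}$ for some fixed $B'>B$; the factor $e^{-B's}$ simultaneously places $u$ in $\GG$ and, together with smoothness, controls $\|u\|_{H^s}$.

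For the construction itself, writing $u(s)=e^{-B's}g(s)$ turns the requirements $\check u(\om_m)=d_m$ into the moment equations $\int_0^{+\ty}e^{(i\om_m-B')s}g(s)\,\dd s=d_m$, that is, into interpolation of the analytic function $G(\zeta)=\int_0^{+\ty}e^{i\zeta s}g(s)\,\dd s$ in the upper half-plane at the nodes $\zeta_m=\om_m+iB'$, which all sit on the horizontal line $\Im\zeta=B'$. I would construct a biorthogonal-type family for the damped exponentials $e^{(i\om_m-B')s}$ by the entire-function technique of Paley--Wiener and Levinson (a canonical product vanishing at every $\om_k$ but one), in the spirit of the proof of Theorem~2.6 in \cite{NN1}, and define $A(d)$ from it. The operator $A$ must be linear and continuous with $\|A(d)\|_\FF\le C\|d\|_{\ell^\ty}$, where crucially the constant $C$ is allowed to depend on the fixed sequence $\{\om_m\}$. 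Here the hypothesis $d\in\ell^\ty_{01}\subset\ell^\ty_0$ enters in an essential way: because $d_m\to 0$, the tails of the reconstruction can be made small in $\FF$, whereas for a general bounded sequence the reconstruction would diverge.

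The main obstacle is the complete absence of a gap condition: by the asymptotics $\la_{m,V}\sim C_dm^{2/d}$ the relevant frequencies satisfy $\om_{m+1}-\om_m\to 0$ in the multidimensional case, so the Ingham-type estimates and the uniform biorthogonal bounds available under a spectral gap do not apply. The reason for working on $[0,+\ty)$ is precisely that the weight $e^{-B's}$ renders $G$ analytic in a half-plane, so the nodes $\zeta_m$ stay at a fixed positive distance $B'$ from the real boundary and the interpolation remains solvable even when the $\om_m$ cluster; the price is exactly that $C$ depends on $\{\om_m\}$. I expect the technical heart of the argument to be the two-sided control of the reconstructed $g$ — showing that it has finite weighted $L^1$ norm \emph{and} finite $H^s$ norm, and that the resulting series converges in $\FF$ using only the decay $d_m\to 0$ — and this is the step that will require the most care, generalizing the one-dimensional estimates of \cite{NN1} to the present gapless nodes.
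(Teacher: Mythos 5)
Your route --- an explicit analytic--interpolation construction via canonical products and biorthogonal families at the shifted nodes $\zeta_m=\om_m+iB'$ --- is genuinely different from the paper's, but it founders on its central claim, namely that keeping the nodes at a fixed height $B'$ above the real axis makes the interpolation solvable ``even when the $\om_m$ cluster''. That claim is false, and the obstruction is elementary. For any $u\in\GG$,
\[
|\check u(\om)-\check u(\om')|\le\int_0^{+\ty}|e^{i\om s}-e^{i\om' s}|\,|u(s)|\,\dd s
\le|\om-\om'|\int_0^{+\ty}s\,|u(s)|\,\dd s
\le\frac{|\om-\om'|}{eB}\,\|u\|_\GG ,
\]
so $\check u$ is Lipschitz on $\R$ with constant proportional to $\|u\|_\GG$; equivalently, $\check u$ extends to a bounded analytic function on $\{\Im\zeta>-B\}$, and the Cauchy estimate at distance $B$ from the boundary controls its derivative --- your function $G$ on the line $\Im\zeta=B'$ obeys exactly the same bound. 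Hence any solution of $\check u(\om_m)=d_m$ forces the compatibility condition $|d_m-d_k|\le C\|u\|_\GG\,|\om_m-\om_k|$. The hypotheses of Proposition \ref{P:P1} allow gaps $\om_{m+1}-\om_m\to0$ (e.g.\ $\om_m=\log m$), and then a sequence such as $d_m=(-1)^m/\log\log(m+10)$ belongs to $\ell^\ty_{01}$ --- note that it even tends to $0$ --- yet violates this compatibility for every finite value of $\|u\|_\GG$: no interpolating $u\in\GG\supset\FF$ exists at all, uniform bound or not. In hyperbolic terms, nodes on a horizontal line whose horizontal gaps shrink are not pseudohyperbolically separated, hence fail Carleson's interpolation condition; distance to the boundary does not help, and neither does the decay $d_m\to0$, which imposes no relation between $|d_{m+1}-d_m|$ and $\om_{m+1}-\om_m$. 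Your scheme (Paley--Wiener/Levinson products with uniform biorthogonal bounds) can only be carried out under a separation hypothesis $\inf_m(\om_{m+1}-\om_m)>0$, which is precisely what is unavailable in the multidimensional application that motivates the proposition.

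For contrast, the paper proves Proposition \ref{P:P1} by soft functional analysis, with no per-node estimates whatsoever: it introduces the Hilbert space $\tilde\FF=\tilde\GG\cap H^s_0(\R_+,\R)\subset\FF$; shows by weak compactness (together with compact embeddings and uniform smallness of the tails of the integrals (\ref{E:eaz}) on balls of $\tilde\GG$) that $F(u)=\|\{\check u(\om_m)\}-d\|_{\ell^\ty}$ attains its infimum on every ball $\overline{B_{\tilde\FF}(0,M)}$; proves by duality and the linear independence of the exponentials $\{e^{i\om_m s}\}$ on $[0,+\ty)$ that the full image $U=\{\{\check u(\om_m)\}:u\in\tilde\FF\}$ is dense in $\ell^\ty_{01}$ (Lemma \ref{L:Min2}); invokes the Baire lemma to upgrade this to density of the image of a single large ball in the unit ball of $\ell^\ty_{01}$; and then produces $A$ abstractly, since a bounded linear surjection from a Hilbert space onto a Banach space admits a bounded linear right inverse. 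You should be aware, however, that the Lipschitz obstruction you ran into is not an artifact of your method: it constrains the image of every bounded subset of $\GG$, so each closure $\overline{\{\{\check u(\om_m)\}:u\in B_{\tilde\FF}(0,M)\}}$ contains only sequences satisfying $|d_m-d_k|\le C_M|\om_m-\om_k|$. Consequently, density of the union of these sets over $M$ does not by itself furnish the covering of the unit ball by their closures that the Baire argument needs; in the gapless case this Baire step --- not the interpolation machinery --- is where the validity of the proposition is actually decided, and sequences of the type constructed above are exactly the ones against which it must be tested.
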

 The proof of the non-invertibility of $  R_\ty(0,\cdot)$   is a remark by  Beauchard and Coron \cite{BeCo} (cf. Step 2 of the proof of Theorem 2.6 in \cite{NN1}).

 \end{proof}
 
 \begin{remark}
  The proof of Theorem \ref{T:Lin} does not work in the multidimensional case for a general $\tilde z\notin\EE$. Indeed,  assume that $\lag z, e_{k_n,V}\rag\neq0$ for some sequence $k_n\ri+\ty.$ Then
  the well-known  asymptotic formula for  eigenvalues 
$
\la_{k,V}~\sim~ C_dk^{\frac{2}{d}}
$
  implies that the frequencies $\omega_{m_nk_n}\ri0$ for some integers $m_n\ge1$   for space dimension $d\ge 3$. Thus the moment problem $\check u (\om_{mk})=d_{mk}$ cannot be solved in the space $L^1(\R_+,\R)$ for a general  $d_{mk}$. Clearly, this does not imply the non-controllability in infinite time of linearized system.
 \end{remark}

   \subsection{Proof of Proposition \ref{P:P1}} 
  The proof of Proposition \ref{P:P1} is close to that of Proposition 2.9 in \cite{NN1}.  Let 
   $$
\tilde \GG:=\{u\in L^1(\R_+,\R): u^2(\cdot)e^{\tilde B\cdot}\in L^1(\R_+,\R) \}
$$endowed with the norm $\|u\|_{\tilde\GG}=\|u^2(\cdot)e^{\tilde B\cdot}\|_{L^1}$, where   the constant $\tilde B>2B$.   Then  $$
\tilde \FF:=   \tilde \GG     \cap H^s_0(\R_+,\R)
$$is a subspace of $\FF$ defined by (\ref{E:tty}). Moreover, $\tilde \FF$ is a Hilbert space.
  The construction of the operator $A$ is based on the following lemma.
  \begin{lemma}\label{L:Min} Under the conditions of Proposition \ref{P:P1},  
    for any $d\in \ell^\ty_{01}$   there is $u\in     \tilde\FF  $ such that $\{\check u (\om_{m})\}=d$.
  \end{lemma}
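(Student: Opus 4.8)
The plan is to exploit the Hilbert-space structure hidden in $\tilde\GG$ and to reduce the claim to a moment/interpolation problem solved by a biorthogonal family. First I would note that $\|u\|_{\tilde\GG}=\int_0^{\ty}u^2e^{\tilde B t}\,\dd t$ is the square of a genuine Hilbert norm, with inner product $\lag u,v\rag_{\tilde\GG}=\int_0^\ty u(t)v(t)e^{\tilde B t}\,\dd t$, and that each functional $u\mapsto\check u(\om_m)$ is represented by a single vector: with $g_m(t):=e^{-(i\om_m+\tilde B)t}$ one has $\overline{g_m(t)}\,e^{\tilde B t}=e^{i\om_m t}$, hence $\check u(\om_m)=\lag u,g_m\rag_{\tilde\GG}$, and $\|g_m\|_{\tilde\GG}$ is independent of $m$. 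Thus the lemma asserts exactly that the analysis map $u\mapsto\{\lag u,g_m\rag_{\tilde\GG}\}$ covers all of $\ell^\ty_{01}$. Writing $u(t)=e^{-\tilde B t/2}w(t)$ with $w\in L^2(\R_+)$, the Paley--Wiener theorem identifies $\check u(\om)=\hat w(\om+i\tilde B/2)$ with the trace of a Hardy-class function, so the problem is interpolation of the data $d_m$ at the nodes $\om_m+i\tilde B/2$ lying on the horizontal line at height $\tilde B/2$ in the upper half-plane.

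Second, I would produce a biorthogonal family. The Gram matrix computes explicitly to the Cauchy/reproducing-kernel form $\lag g_k,g_m\rag_{\tilde\GG}=(\tilde B-i(\om_m-\om_k))^{-1}$, so the minimality of $\{g_m\}$ in $\tilde\GG$ is equivalent to the linear independence of the exponentials $\{e^{i\om_m t}\}$ on $[0,\ty)$, which is precisely Lemma 3.10 in \cite{VN}. Minimality furnishes a biorthogonal system $\{\theta_m\}\subset\tilde\GG$ with $\check\theta_k(\om_m)=\de_{km}$; for finitely supported data the finite combination $\sum_m d_m\theta_m$ already solves the moment problem, and I would then set $u:=\sum_{m\ge1}d_m\theta_m$, so that linearity of the eventual operator in Proposition \ref{P:P1} reduces to estimates on the $\theta_m$.

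Third come the two structural constraints. Reality of $u$ is automatic in the only place it is tested: since $\om_1=0$ the value $\check u(0)=\int_0^\ty u$ is real, matching the constraint $d_1\in\R$ built into $\ell^\ty_{01}$, while for $m\ge2$ the frequencies $\om_m$ are positive and distinct, so a real $u$ realizes arbitrary complex $d_m$ (its cosine- and sine-moments being independent). The membership $u\in H^s_0$ requires smoothness, which is automatic because each $\theta_m$ is built from the smooth rapidly decaying exponentials $g_k$, together with the vanishing $u^{(j)}(0)=0$ for $j<s$; I would secure the latter by constructing the biorthogonal family inside $\tilde\FF$ from the start, or equivalently by correcting $u$ with an element of a fixed finite-dimensional space that annihilates the finitely many boundary derivatives without perturbing the moments, exactly as in the corresponding step of Proposition 2.9 in \cite{NN1}.

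The hard part will be the convergence of $\sum_m d_m\theta_m$ in $\tilde\GG$ for data that merely tends to zero rather than lying in $\ell^2$, in the absence of any spectral gap in $\{\om_m\}$: a naive bound $\sum_m|d_m|\,\|\theta_m\|_{\tilde\GG}$ is hopeless for general $d\in\ell^\ty_0$. This is where the strong weight $\tilde B>2B$ is essential. I would extract from the Cauchy structure of the Gram matrix quantitative control of $\|\theta_m\|_{\tilde\GG}$ and, more importantly, of the partial sums $\|\sum_m d_m\theta_m\|_{\tilde\GG}$, using the frequency localization that the exponential weight provides, along the lines of the estimates in Proposition 2.9 of \cite{NN1}. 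Making these bounds uniform in $d$, so that the resulting map is bounded from $\ell^\ty_{01}$ into $\tilde\FF\subset\FF$, is the crux of the whole argument.
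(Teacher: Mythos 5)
Your reduction to a Hardy--space interpolation problem is correct as far as it goes: $\tilde\GG$ is indeed a Hilbert space, $\check u(\om_m)=\lag u,g_m\rag_{\tilde\GG}$ with $g_m(t)=e^{-(i\om_m+\tilde B)t}$, and after the substitution $w=ue^{\tilde B t/2}$ the problem becomes interpolation of $\{d_m\}$ at the nodes $z_m=\om_m+i\tilde B/2$ by functions in $H^2(\C_+)$. The fatal step is the next one. Minimality of $\{g_m\}$ (equivalently, existence of your biorthogonal family $\{\theta_m\}$) is \emph{not} equivalent to linear independence of the exponentials on $[0,+\ty)$; it is strictly stronger. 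Existence of $\theta_m\in\tilde\GG$ with $\check\theta_m(\om_k)=\de_{km}$ means existence of a function in $H^2(\C_+)$ vanishing at every node $z_k$, $k\neq m$, and nonzero at $z_m$; this forces the Blaschke condition $\sum_k (1+\om_k^2)^{-1}<+\ty$. Proposition \ref{P:P1} assumes only that $\om_m$ increases to $+\ty$ --- no growth rate at all --- so this sum may diverge; and in the paper's own application the frequencies behave like $\la_{m,V}\sim C_d m^{2/d}$, so the Blaschke condition fails for every $d\ge4$, i.e.\ precisely in the multidimensional setting the paper targets. When it fails, the kernels $\{g_m\}$ are complete in the space and each one lies in the closed span of the others: no biorthogonal family exists, and your $u=\sum_m d_m\theta_m$ cannot even be written down. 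Lemma 3.10 of \cite{VN} gives exactly linear independence, which yields density of the range of the moment map --- nothing more.

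Second, even in regimes where the $\theta_m$ do exist, you yourself flag the convergence of $\sum_m d_m\theta_m$ for data merely in $\ell^\ty_0$ (no $\ell^2$ decay, no spectral gap) as ``the crux'' and do not supply it; absent a uniform-minimality or Riesz-type property --- unavailable without a gap --- there is no reason such a series converges. The paper's proof is built precisely to avoid both obstacles: it never inverts the moment map element by element. It minimizes $F(u)=\|\{\check u(\om_m)\}-d\|_{\ell^\ty}$ over closed balls of the Hilbert space $\tilde\FF$ (weak compactness, the compact embedding $H^1([0,N])\hookrightarrow C([0,N])$, and the uniform tail bound supplied by the weight $e^{\tilde B t}$ give a minimizer), and then shows the minimum is zero by combining density of the range --- the duality argument of Lemma \ref{L:Min2}, which is where Lemma 3.10 of \cite{VN} is actually used --- with the Baire lemma applied to images of balls. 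Surjectivity is thus obtained by a soft compactness/category argument rather than an explicit interpolating series, which is why the lemma holds in the stated generality where your construction provably breaks down.
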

  \begin{proof}[Proof of Proposition \ref{P:P1}]
  By Lemma \ref{L:Min}, the   mapping $ u\ri\{\check u (\om_{m})\}
$ 
 is surjective linear bounded form Hilbert space $\tilde\FF$ onto Banach space $\ell^\ty_{01}$. Hence it admits a linear bounded right inverse $A :\ell^\ty_{01}\ri \tilde \FF$. 
\end{proof} 
\begin{proof}[Proof of Lemma \ref{L:Min}]
Let us show that  there is a constant $M>~0$ such that 
  for any $d\in \ell^\ty_{01}, \|d\|_{\ell^\ty_{01}} \le1$   there is $u\in   {B_{ \tilde\FF}(0,M)}$ satisfying $\{\check u (\om_{m})\}=~d$.

 Let us   introduce the functional
$$
F(u):=\| \{\check u (\om_{m})\}-d\|_{ \ell^\ty}    
$$
defined on the space $\tilde \FF$.

\vspace{6pt}\textbf{Step 1.} First, let us show that for any $M>0$   there is $u_0\in \overline {B_{\tilde \FF}(0,M)} $ such  that
\begin{equation}\label{E:mh}
F(u_0)= \inf_{u\in \overline {B_{ \tilde\FF}(0,M)} } F(u).
\end{equation}To this end, let $u_n\in \overline {B_{\tilde \FF}(0,M)}  $ be an arbitrary  minimizing sequence. Since $  \tilde \FF$ is reflexive, without loss of generality, we can assume that  there is $u_0\in \overline {B_{  \tilde \FF}(0,M)} $ such that $u_n \rightharpoonup u_0 $  in $  \tilde \FF$. Using the compactness of the injection $H^1([0,N])\ri C([0,N])$ for any $N>0$ and a diagonal extraction, we can assume that $u_n  (t)\ri u_0 (t)$ uniformly for $t\in[0,N]$.   Again extracting a subsequence, if it is necessary, one gets $\{\check u_n(\om_{m})\} \ri  \{\check u_0(\om_{m})\} $  in $  \ell ^\ty$ as $n\ri+\ty.$ Indeed, the tails on $[T,+\ty)$, $T\gg1$ of the integrals  (\ref{E:eaz}) are small uniformly in $n$ (this comes from the boundedness
of $u_n$ in $\tilde \GG$), and on the finite interval $[0, T]$ the convergence is uniform. This implies that
$$
F(u_0)\le \inf_{u\in \overline {B_{ \tilde\FF}(0,M)} } F(u).
$$Since $u_0\in \overline {B_{ \tilde \FF}(0,M)} $,  we have (\ref{E:mh}).

\vspace{6pt}\textbf{Step 2.} To complete the proof, we need to show that $ F(u_0)=0.$    
\begin{lemma}\label{L:Min2} Under the conditions of Proposition \ref{P:P1},
the set
$$  U:=\{  \{\check u(\om_{m})\}: u\in \tilde \FF\}
$$
is dense in $ \ell^\ty_{10} $.
\end{lemma}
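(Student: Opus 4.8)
The plan is to prove the density by duality, reducing it to the linear independence of the exponentials $e^{i\om_m s}$ on the half-line. Recall first that $U\subseteq \ell^\ty_{01}$: for $u\in\tilde\FF\subset L^1(\R_+,\R)$ the Riemann--Lebesgue lemma gives $\check u(\om_m)\ri0$ as $m\ri+\ty$ (since $\om_m\ri+\ty$), while $\check u(\om_1)=\int_0^{+\ty}u(s)\,\dd s\in\R$ because $\om_1=0$ and $u$ is real. Viewing $\ell^\ty_{01}$ as a closed real-linear subspace of the complex space $\ell^\ty_0$, it suffices, by Hahn--Banach, to show that every continuous real-linear functional $\Phi$ on $\ell^\ty_{01}$ vanishing on $U$ is identically zero.

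To that end I would extend such a $\Phi$ to a continuous real-linear functional on all of $\ell^\ty_0$ and use the duality between $\ell^\ty_0$ and $\ell^1$ to represent it as $\Phi(\{a_m\})=\Re\sum_{m\ge1}\overline{b}_m a_m$ for some $\{b_m\}\in\ell^1$. The hypothesis $\Phi|_U=0$ then reads, after interchanging the absolutely convergent sum with the integral,
\[
\int_0^{+\ty} u(s)\,\Re g(s)\,\dd s=0\quad\text{for all }u\in\tilde\FF,\qquad g(s):=\sum_{m\ge1}\overline{b}_m e^{i\om_m s}.
\]
Here $g$ is bounded and continuous because $\{b_m\}\in\ell^1$. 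Since $\tilde\FF$ contains every smooth function with compact support in $(0,+\ty)$ (such functions satisfy the vanishing conditions at $0$ and lie in $\tilde\GG$ trivially), and these are dense in $L^1_{loc}(0,+\ty)$, I would conclude $\Re g\equiv0$ on $\R_+$.

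The final and main step is to deduce $\{b_m\}=0$ from $\Re g\equiv0$; this is exactly the linear independence of the family $\{e^{i\om_m s}\}$ on $[0,+\ty)$. Writing $2\Re g(s)=\sum_m \overline{b}_m e^{i\om_m s}+\sum_m b_m e^{-i\om_m s}$, one has a uniformly convergent trigonometric series over the pairwise distinct frequencies $\{\pm\om_m\}$, the distinctness following from the strict monotonicity of $\om_m$ together with $\om_1=0$. Computing the Bohr mean values $\lim_{T\ri+\ty}\frac1T\int_0^T 2\Re g(s)\,e^{i\om_k s}\,\dd s$ and using that the mean of $e^{i\mu s}$ vanishes for $\mu\ne0$ and equals $1$ for $\mu=0$, the left-hand side picks out $b_k$ (and $2b_1$ when $k=1$); since $\Re g\equiv0$ all these means vanish, hence $b_k=0$ for every $k$. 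Thus $\Phi\equiv0$ and $U$ is dense. I expect this last extraction to be the crux: turning the pointwise vanishing of $\Re g$ on the half-line into the vanishing of all coefficients, which I would carry out via the almost-periodic mean-value argument above (equivalently, by the independence of exponentials on $[0,+\ty)$ invoked for the frequencies $\om_{mk}$ in the introduction).
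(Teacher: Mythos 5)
Your proof is correct and follows the same duality skeleton as the paper's, but it resolves the crux by a different means. The paper performs the identical reduction: take $h=\{h_m\}\in\ell^1$ whose pairing with every $\{\check u(\om_m)\}$, $u\in\tilde\FF$, vanishes, interchange sum and integral to get $\int_0^{+\ty}u(s)\bigl(\sum_m \overline{h}_m e^{i\om_m s}\bigr)\dd s=0$, and then it simply invokes Lemma 3.10 of \cite{VN} (independence of the exponentials $e^{i\om_m s}$ on $[0,+\ty)$) to conclude $h=0$. You instead make this last step self-contained: using that $C_c^\ty((0,+\ty))\subset\tilde\FF$ you localize to get the pointwise vanishing of $\Re g$, and then you extract the coefficients by Bohr mean values, which is legitimate here because $\{b_m\}\in\ell^1$ makes the series uniformly convergent, hence $\Re g$ is (Bohr) almost periodic and the interchange of the mean with the sum is justified by dominated convergence. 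Your route buys a proof with no external citation, at the price of being restricted to $\ell^1$ coefficients (which is all that is needed here); the paper's version is shorter and delegates the independence statement to a lemma proved elsewhere. You are also more careful than the paper about the real Banach structure: the paper pairs against $\ell^\ty_0$ with the complex duality and does not comment on the fact that $U$ and $\ell^\ty_{01}$ are only \emph{real} subspaces of $\ell^\ty_0$; your extension of a real-linear functional by Hahn--Banach and the representation $\Phi(\{a_m\})=\Re\sum_m\overline{b}_m a_m$ is the correct way to phrase the density argument in $\ell^\ty_{01}$.

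One small imprecision, not a gap: at the zero frequency ($k=1$, $\om_1=0$) the mean of $2\Re g(s)\,e^{i\om_1 s}$ equals $b_1+\overline{b}_1=2\Re b_1$, not $2b_1$, so your computation yields only $\Re b_1=0$; indeed $b_1=0$ may fail for the extended functional (the extension to $\ell^\ty_0$ can carry a purely imaginary $b_1$, e.g.\ $\Phi(\{a_m\})=-\Im a_1$, which kills $\ell^\ty_{01}$ but not $\ell^\ty_0$). This is harmless for your argument: on $\ell^\ty_{01}$ the first coordinate $a_1$ is real, so $\Re(\overline{b}_1 a_1)=a_1\Re b_1=0$, and the restriction of $\Phi$ to $\ell^\ty_{01}$ still vanishes identically, which is all the Hahn--Banach criterion requires.
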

 Combining this with  the  Baire lemma, we get that for sufficiently large $M>0$
  $$  \tilde U:=\{  \{\check u(\om_{m})\}: u\in B_{ \tilde \FF} (0,M)\}
$$is dense in $B_{\ell^\ty_{10}}(0,1)$. Thus $ F(u_0)=0.$

 \end{proof}  \begin{proof}[Proof of Lemma \ref{L:Min2}]
 It is well known that the dual of $\ell^\ty_0$ is $\ell^1.$
Let us suppose that $h=\{h_m\}\in  \ell^1 $ is such that 
$$\lag h, \{\check u(\om_{m})\}\rag_{\ell^1, \ell^\ty_0}=0 $$
for all $u\in \tilde \FF$. Then  replacing in this equality $\check u (\om_{m}) $ by its integral representation, we get   
\begin{align*}& 0=\sum_{m=1 }^{+\ty}  \int^{+\ty}_0 e^{i\om_{m}s} u(s)\dd s\overline{h_{m}} =  \int^{+\ty}_0 u (s) \Big( \sum_{m =1 }^{+\ty} e^{i\om_{m}s} \overline{h_{m}} \Big) \dd s .
\end{align*}  Since  $\om_i\neq \om_j$ for $i\neq j$, by Lemma 3.10 in \cite{VN}, we have  $h_m=0$ for any $m\ge1$.  This proves that $U$ is dense.

 \end{proof}

\subsection{Application of the inverse mapping theorem}\label{S:1}

The proof is based on the inverse mapping theorem.  
 We project the system   onto the tangent space $T_{\tilde z}$ and  apply the inverse mapping theorem to the following    mapping
\begin{align}
\tilde\UU_\ty( \cdot) :   \FF&\ri T_{\tilde z}\cap \HH  , \nonumber \\
 u&\ri   P \UU_\ty(\tilde z,u) , \nonumber
\end{align} where $P $ is the orthogonal projection in $L^2$ onto $T_{\tilde z}$, i.e., $Pz= z-\Re\lag z, \tilde z\rag \tilde z, z\in L^2$. Notice that   $P ^{-1}:B_{T_{\tilde z}}(0,\de)\ri S$ is well defined for sufficiently small $\de>~0$.
The following result proves that $\tilde\UU_\ty$ is $C^1$.
\begin{proposition}\label{P:ppo}For a sufficiently small $\de>0$ the mapping
\begin{align}
 \UU_\ty(\tilde z, \cdot) :   B_\FF(0,\de)&\ri   \HH  , \nonumber \\
 u&\ri     \UU_\ty(\tilde z,u) , \nonumber
\end{align}
is $ C^1$. Moreover,   $\dd \UU_\ty(\tilde z, u)v=R_\ty(u,v)$, where 
\begin{align}\label{R:lim}
R_\ty(u,v):=\lim _{n\ri+\ty} R_{T_n}(u,v)\quad\text{in $\HH$,}
\end{align}and $R_t$ is the resolving operator of
\begin{align}
i\dot z &= -\Delta z+V(x)z  + u(t) Q(x)z+v(t) Q(x)\UU_t(\tilde z,u),\,\,\,\,
\label{E:hav6}\\
z\arrowvert_{\partial D}&=0,\label{E:ep6}\\
 z(0,x)&=z_0.\label{E:sp6}
\end{align}  

\end{proposition}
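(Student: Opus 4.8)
The plan is to prove that $u\mapsto \UU_\ty(\tilde z,u)$ is Fr\'echet differentiable at every $u\in B_\FF(0,\de)$ with derivative $R_\ty(u,\cdot)$, and that $u\mapsto R_\ty(u,\cdot)$ is continuous from $B_\FF(0,\de)$ into $\LL(\FF,\HH)$; since $P$ is bounded and linear, this yields at once that $\tilde\UU_\ty$ is $C^1$. First I would check that the limit (\ref{R:lim}) exists in $\HH$ and defines a continuous linear map $v\mapsto R_\ty(u,v)$: writing (\ref{E:hav6})--(\ref{E:sp6}) in Duhamel form and using that $S(-t)$ is an isometry of $\HH$ (because $|\lag S(t)z,e_{j,V}\rag|=|\lag z,e_{j,V}\rag|$), one applies the estimate (\ref{E:S12}), with weights $u$ and $v$, together with (\ref{E:llav}), exactly as in the construction of $\UU_\ty$ in (\ref{E:ldl}); this gives $\|R_\ty(u,v)\|_\HH\le C\|v\|_\FF$. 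Here I rely on the fact that the proof of (\ref{E:S12}) uses only integration by parts, the Dirichlet boundary conditions, and the bound $\|z(\tau)\|_{H^l}$, so it applies to any $H^l$-valued curve with vanishing trace, not merely to genuine solutions of the nonlinear equation.

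The core step will be the differentiability estimate. Fixing $u$ and a direction $v$ with $u+v\in B_\FF(0,\de)$, I would set
\[
w(t):=\UU_t(\tilde z,u+v)-\UU_t(\tilde z,u)-R_t(u,v).
\]
A direct computation using (\ref{E:hav1}) and (\ref{E:hav6}) shows that $w$ solves the Schr\"odinger equation with control $u+v$, zero initial datum, and source term $v(t)\,Q\,R_t(u,v)$; that is, $w$ is driven only by the product of the perturbation $v$ with the quantity $R_t(u,v)$, which is itself linear in $v$. Hence $w$ is quadratic in $v$: applying (\ref{E:gnar}) first to $R_\cdot(u,v)$ and then to $w$ gives $\|w(\tau)\|_{H^l}\le C\|v\|_\FF^2\,e^{C\tau}$, with $C$ uniform for $u+v$ in a fixed ball. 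Conjugating the Duhamel form of $w$ by $S(-t)$ and applying (\ref{E:S12}) to the two terms $(u+v)Qw$ and $vQR_\cdot(u,v)$, I obtain, uniformly in $n$,
\[
\|w(T_n)\|_\HH\le C\int_0^{T_n}\!\|w(\tau)\|_{H^l}\,|(u+v)(\tau)|\,\dd\tau + C\int_0^{T_n}\!\|R_\tau(u,v)\|_{H^l}\,|v(\tau)|\,\dd\tau.
\]
In both integrands the factor $e^{C\tau}$ coming from (\ref{E:gnar}) is absorbed by the weight $e^{B\tau}$ built into $\GG$, provided $B$ is taken large enough; since $u,v\in\GG$ both integrals converge and are bounded by $C\|v\|_\FF^2$. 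Letting $n\ri+\ty$ would then give $\|\UU_\ty(\tilde z,u+v)-\UU_\ty(\tilde z,u)-R_\ty(u,v)\|_\HH\le C\|v\|_\FF^2=o(\|v\|_\FF)$, i.e. $\dd\UU_\ty(\tilde z,u)v=R_\ty(u,v)$.

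For the continuity of the derivative I would estimate $R_\ty(u',v)-R_\ty(u,v)$ for $\|v\|_\FF\le 1$. The difference $\rho:=R_\cdot(u',v)-R_\cdot(u,v)$ solves the Schr\"odinger equation with control $u'$, zero initial datum, and source $(u'-u)\,Q\,R_\cdot(u,v)+v\,Q\big(\UU_\cdot(\tilde z,u')-\UU_\cdot(\tilde z,u)\big)$. Using the linear bound on $R_\cdot(u,v)$, the Lipschitz estimate (\ref{E:ppz}) for $\UU_\cdot(\tilde z,u')-\UU_\cdot(\tilde z,u)$, and once more (\ref{E:S12}) together with the weight of $\GG$, I expect $\|\rho_\ty\|_\HH\le C\|u'-u\|_\FF$ uniformly for $\|v\|_\FF\le1$. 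This makes $u\mapsto R_\ty(u,\cdot)$ continuous into $\LL(\FF,\HH)$, completing the proof that $\UU_\ty(\tilde z,\cdot)$ is $C^1$.

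The hard part will be the passage to infinite time in the $\HH$-norm uniformly in $n$: the available well-posedness estimates (\ref{E:gnar}) and (\ref{E:ppz}) grow exponentially in $\tau$, and the only tool controlling the $\HH$-norm of the Duhamel integrals is the integration-by-parts/Riemann--Lebesgue bound (\ref{E:S12}), which trades $\HH$-regularity for an $H^l$-integral weighted against the control. Reconciling the two forces the exponential weight of $\GG$ to dominate a product of two exponentially growing factors, whence the need to enlarge $B$; keeping every constant uniform for $u+v$ in a fixed $\FF$-ball is the delicate quantitative point on which the argument hinges.
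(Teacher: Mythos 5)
Your proposal is correct and follows essentially the same route as the paper: you identify the same equations satisfied by the differentiability remainder $h=\UU_t(\tilde z,u+v)-\UU_t(\tilde z,u)-R_t(u,v)$ (control $u+v$, source $v\,Q\,R_t(u,v)$) and by the difference of derivatives $R_t(u_1,v)-R_t(u_2,v)$, then combine the $\HH$-estimate (\ref{E:S12}) of Proposition \ref{L:lav} with the well-posedness bounds (\ref{E:gnar}), (\ref{E:ppz}) and the exponential weight of $\GG$ (taking $B$ large) to get the quadratic bound $C\|v\|_\FF^2$ and the Lipschitz bound $C\|u_1-u_2\|_\FF$, exactly as in the paper. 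Your explicit remark that (\ref{E:S12}) applies to solutions of the linearized equations with source terms (not only to $\UU_t(z_0,u)$ itself) makes precise a point the paper uses implicitly, but it is the same argument.
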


This proposition  implies  that $\tilde\UU_\ty\in C^1(B_\FF(0,\de))$.
By the definition of $T_n$, we have $\lim_{n\ri +\ty}\UU_{T_n}(\tilde z,0)=\tilde z $. Hence   $\UU_\ty(\tilde z,0)= \tilde z $ and  $\tilde\UU_\ty(0)=0$. We have $\dd \tilde\UU_\ty(0)v=R_\ty(0,v)$, which is invertible for $\tilde z\notin \EE_0$ in view of Theorem~\ref{T:Lin}. Thus applying the inverse mapping theorem, we complete the proof of Theorem~\ref{T:AnvL} for $\tilde z\notin \EE_0$.

In the case $\tilde z\in \EE_0$
the linearized system   is
not controllable, and $R_\ty$  is not invertible. Controllability near $\tilde z$ in finite time and for $d=1$
is proved by Beauchard and Coron \cite{BeCo}. They show that the linearized system
is controllable up to codimension one. This implies that the nonlinear system
is also controllable up to codimnsion one.   The controllability in the missed
directions is proved using the intermediate values theorem. In the case $d\ge1$ and $T=+\ty$, the proof repeats literally the arguments of \cite{BeCo}. 
We omit the details.

\begin{proof}[Proof of Proposition \ref{P:ppo}] See \cite{KBCL} for the proof the fact that $\UU_T(\tilde z, \cdot)$ is $C^1$ when $T$ is finite, $d=1$ and phase space is $H^3$.   
Let us show that  $\UU_\ty(\tilde z, \cdot)$ is differentiable at any $u\in B_\FF(0,\de)$ for sufficiently small $\de>0$. We need to prove that 
\begin{align}\label{E:mml}
\|\UU_\ty(\tilde z, u+v)-\UU_\ty(\tilde z, u )-R_\ty(u,v)\|_\HH=o(\|v\|_\FF).
\end{align}Notice that $h=\UU_t(\tilde z, u+v)-\UU_t(\tilde z, u )-R_t(u,v)$ is a solution of
\begin{align*}
i\dot h &= -\Delta h+V(x)h  + (u(t)+v(t)) Q(x)h+v(t) Q(x)R_t(u,v),\,\,\,\,
 \\
h\arrowvert_{\partial D}&=0, \\
 h(0,x)&=0. 
\end{align*}  Using Proposition \ref{L:lav} and Lemma \ref{L:LmpD}, we get
\begin{align*}
&\|h(\ty)\|_\HH \le C\int_0^{+\ty} (\|h(\tau)\|_{H^l}|u(\tau)+v(\tau)|+\|R_\tau(u,v)\|_{H^l}|v(\tau)|)\dd \tau\nonumber\\&\le C\int_0^{+\ty}  ( \|v\|_{C^m_0 }\| R_\cdot(u,v)\|_{W^{m,1} ([0,\tau  ],H^2_{(V)})} |u(\tau)+v(\tau)|e^{C(\|u+v\|_{C^m_0 }+1)\tau}\nonumber\\&\quad+\|R_\tau(u,v)\|_{H^l}|v(\tau)|)\dd \tau\nonumber\\&\le C\int_0^{+\ty}  ( \|v\|_{C^m_0 }^2\| \UU_\cdot(\tilde z,u)\|_{W^{m,1} ([0,\tau  ],H^2_{(V)})} |u(\tau)+v(\tau)|e^{C(\|u+v\|_{C^m_0 }+\|v\|_{C^m_0 }+2)\tau}\nonumber\\&\quad+\|v\|_{C^m_0 }\| \UU_\cdot(\tilde z,u)\|_{W^{m,1} ([0,\tau  ],H^2_{(V)})} |v(\tau)|e^{C (\|v\|_{C^m_0 }+1) \tau})\dd \tau \nonumber\\& \le C\|v\|^2_\FF,
\end{align*} for any $v\in B_\FF(0,\e)$, sufficiently small $\e>0$, and for sufficiently large $B>0$ in the definition of $\GG$. 

It remains to prove that $R_\ty(u,\cdot)$ is continuous in  $ B_\FF(0,\de)$. For $g:=R_t(u_1,v)-R_t(u_2,v)$ we have
\begin{align*}
i\dot g &= -\Delta g+V(x)g  + u_1(t)  Q(x)g+(u_1(t)-u_2(t))Q(x)R_t(u_2,v)\\&\quad+v(t) Q(x)(\UU_t(\tilde z ,u_1)-\UU_t(\tilde z,u_2)),\\
g\arrowvert_{\partial D}&=0, \\
 g(0,x)&=0. 
\end{align*} By Proposition \ref{L:lav}, 
\begin{align*} 
\|g(\ty)\|_\HH&\le C\int_0^{+\ty}(\|g(\tau)\|_{H^l}|u_1(\tau)|+\|R_\tau(u_2,v)\|_{H^l}|u_1(\tau)-u_2(\tau)|\nonumber\\&\quad+\|\UU_\tau(\tilde z ,u_1)-\UU_\tau(\tilde z,u_2))\|_{H^l}|v(\tau)|)\dd \tau=:I_1+I_2+I_3.
\end{align*}  Lemmas \ref{L:LmpD} and \ref{L:oe} imply
\begin{align*} I_1&\le C\int_0^{+\ty}  (\|R_\cdot(u_2,v)\|_{W^{m,1} ([0,\tau  ],H^2_{(V)})}\|u_1(\tau)-u_2(\tau)\|_{C_0^m}\nonumber \\&\quad+ \|\UU_\cdot(\tilde z ,u_1)-\UU_\cdot(\tilde z,u_2)))\|_{W^{m,1} ([0,\tau  ],H^2_{(V)})}||v(t)\|_{C_0^m})|u_1(\tau)|e^{C(\|u_1\|_{C_0^m}+1)\tau}\dd \tau\nonumber\\&\le C \|u_1-u_2\|_\FF.
\end{align*}  The terms $I_2,I_3$ are treated in a similar way. Thus we get   the continuity of $R_\ty(u,\cdot)$.

\end{proof}
   
\addcontentsline{toc}{section}{References}

\end{document}